\documentclass[12pt,leqno,openbib]{amsart}
\usepackage{amssymb,amsmath,graphicx,amsfonts,euscript}

\setlength{\textheight}{9in} \setlength{\textwidth}{6.2in}
\setlength{\oddsidemargin}{0.2in} \setlength{\evensidemargin}{0.2in}
\setlength{\parindent}{0.2in}
\setlength{\topmargin}{-0.1in} \setcounter{section}{0}
\setcounter{figure}{0} \setcounter{equation}{0}

\newtheorem{thm}{Theorem}[section]

\newtheorem{prop}[thm]{Proposition}
\newtheorem{define}[thm]{Definition}
\newtheorem{rem}[thm]{Remark}

\newtheorem{assume}[thm]{Assumption}

\newcommand{\bb}{\begin{equation}}
\newcommand{\ee}{\end{equation}}
\newcommand{\bq}{\begin{eqnarray}}
\newcommand{\eq}{\end{eqnarray}}
\newcommand{\bqn}{\begin{eqnarray*}}
\newcommand{\eqn}{\end{eqnarray*}}
\def\pp{\partial}

\voffset=-0.2in
\numberwithin{equation}{section}
\subjclass[2000]{35Q53, 35B35, 35B65, 76D03}
\keywords{generalized surface quasi-geostrophic equation, global regularity}

\begin{document}
\title[Generalized surface quasi-geostrophic equations]
{Inviscid models generalizing the 2D Euler and the surface quasi-geostrophic equations}
\author[Dongho Chae, Peter Constantin and Jiahong Wu]{Dongho Chae$^{1}$, Peter Constantin$^{2}$ and Jiahong Wu$^{3}$}
\address{$^1$ Department of Mathematics,
Sungkyunkwan University,
Suwon 440-746, Korea}

\address{$^2$Department of Mathematics,
 University of Chicago,
 5734 S. University Avenue,
Chicago, IL 60637, USA.}

\address{$^3$Department of Mathematics,
Oklahoma State University,
401 Mathematical Sciences,
Stillwater, OK 74078, USA.}

\email{chae@skku.edu}
\email{const@cs.uchicago.edu}
\email{jiahong@math.okstate.edu}

\begin{abstract}
Any classical solution of the 2D incompressible Euler equation is global in time.
However, it remains an outstanding open problem whether classical solutions of the
surface quasi-geostrophic (SQG) equation preserve their regularity for all time.
This paper studies solutions of a family of active scalar equations in
which each component $u_j$ of the velocity field $u$ is determined by the scalar $\theta$ through $u_j =\mathcal{R} \Lambda^{-1} P(\Lambda) \theta$ where $\mathcal{R}$ is a Riesz transform and $\Lambda=(-\Delta)^{1/2}$. The 2D Euler vorticity equation corresponds to the special case $P(\Lambda)=I$ while the SQG equation to the case $P(\Lambda) =\Lambda$.  We develop tools to bound $\|\nabla u||_{L^\infty}$ for a general class of operators $P$ and establish the global regularity for the Loglog-Euler equation for which $P(\Lambda)= (\log(I+\log(I-\Delta)))^\gamma$ with $0\le \gamma\le 1$. In addition, a regularity criterion for the model corresponding to $P(\Lambda)=\Lambda^\beta$ with $0\le \beta\le 1$ is also obtained.
\end{abstract}

\maketitle

\section{Introduction and statements of the main results}
\label{intr}
\setcounter{equation}{0}

This paper studies solutions of the active scalar equation
\begin{equation} \label{general}
\left\{
\begin{array}{l}
\pp_t \theta + u\cdot\nabla \theta =0, \quad x\in \mathbb{R}^d, \, t>0, \\
u = (u_j), \quad u_j = \mathcal{R}_l \Lambda^{-1} P(\Lambda)\, \theta,\quad 1\le j, \,l\le d,
\end{array}
\right.
\end{equation}
where $\theta =\theta(x,t)$ is a scalar function of $x\in \mathbb{R}^{d}$ and $t\ge 0$, $u$ denotes a velocity field with its component $u_j$ ($1\le j\le d$) given by a Riesz transform $\mathcal{R}_l$ applied to $\Lambda^{-1} P(\Lambda)\, \theta$.
Here the operators $\Lambda = (-\Delta)^{\frac12}$, $P(\Lambda)$ and $\mathcal{R}_l$ are defined through their Fourier transforms, namely
$$
\widehat{\Lambda f}(\xi) = |\xi| \widehat{f}(\xi), \quad \widehat{P(\Lambda) f}(\xi) = P(|\xi|) \widehat{f}(\xi), \quad \widehat{\mathcal{R}_l f}(\xi)= \frac{i\,\xi_l}{|\xi|}\, \widehat{f}(\xi),
$$
where $1\le l\le d$ is an integer, $\widehat{f}$ or $\mathcal{F}(f)$ denotes the Fourier transform,
$$
\widehat{f}(\xi) = \mathcal{F}(f)(\xi) =\frac{1}{(2\pi)^{d/2}} \int_{\mathbb{R}^d} e^{-i x\cdot \xi} f(x)\,dx.
$$

\vskip .1in
Our consideration is restricted to $P$ satisfying the following Assumption.
\begin{assume} \label{P_con}
The symbol $P=P(|\xi|)$ assumes the following properties:
\begin{enumerate}
\item $P$ is continuous on $\mathbb{R}^d$ and $P\in C^\infty(\mathbb{R}^d\setminus\{0\})$;
\item $P$ is radially symmetric;
\item $P=P(|\xi|)$ is nondecreasing in $|\xi|$;
\item There exist two constants $C$ and $C_0$ such that
\begin{equation*}
\sup_{2^{-1} \le |\eta| \le 2}\, \left|(I-\Delta_\eta)^n \,P(2^j |\eta|)\right| \le C\, P(C_0 \, 2^j)
\end{equation*}
for any integer $j$ and $n=1,2,\cdots, 1+ \left[\frac{d}{2}\right]$.
\end{enumerate}
\end{assume}
We remark that (4) in Assumption \ref{P_con} is a very natural condition on symbols of Fourier multiplier operators and is similar to the main condition in the Mihlin-H\"{o}rmander Multiplier Theorem (see e.g. \cite[p.96]{St}). For notational convenience, we also assume that $P\ge 0$. Some special examples of $P$ are
\begin{eqnarray*}
&& P(\xi) = \left(\log(1 +|\xi|^2)\right)^\gamma \quad\mbox{with $\gamma\ge 0$}, \\
&& P(\xi) = \left(\log(1+\log(1 +|\xi|^2))\right)^\gamma \quad\mbox{with $\gamma\ge 0$}, \\
&& P(\xi) = |\xi|^\beta \quad\mbox{with $\beta\ge 0$},\\
&& P(\xi) = (\log(1 +|\xi|^2))^\gamma\,|\xi|^\beta \quad\mbox{with $\gamma\ge 0$ and $\beta\ge 0$}.
\end{eqnarray*}

\vskip .1in
A particularly important case of (\ref{general}) is the 2D active scalar equation
\begin{equation} \label{general2d}
\left\{
\begin{array}{l}
\pp_t \theta + u\cdot\nabla \theta =0, \quad x\in \mathbb{R}^2, \, t>0, \\
u = \nabla^\perp \psi\equiv (-\pp_{x_2}\psi, \pp_{x_1} \psi), \quad
-\Lambda^2 \psi = P(\Lambda)\, \theta
\end{array}
\right.
\end{equation}
which generalizes the 2D Euler vorticity equation
\begin{equation}\label{euler}
\left\{
\begin{array}{l}
\pp_t \omega + u \cdot \nabla \omega =0,\\
u =\nabla^\perp \psi, \quad \Delta\psi=\omega
\end{array}
\right.
\end{equation}
and the surface quasi-geostrohic (SQG) equation
\begin{equation}\label{SQG}
\left\{
\begin{array}{l}
\pp_t \theta + u \cdot \nabla \theta = 0,\\u=\nabla^\perp \psi, \quad -\Lambda\psi = \theta.
\end{array}
\right.
\end{equation}
The 2D Euler equation has been extensively studied and its global regularity has long been established (see e.g. \cite{Che}, \cite{MaBe} and \cite{MaPu}). The SQG equation and its dissipative counterpart have recently attracted a lot of attention and numerous efforts have been devoted to the global regularity and related issues concerning their solutions  (see e.g. \cite{AbHm}, \cite{Bae}, \cite{Bar}, \cite{Blu}, \cite{CaS}, \cite{CV}, \cite{CaFe}, \cite{Ch}, \cite{ChJDE}, \cite{Cha}, \cite{Cha2}, \cite{Cha4}, \cite{CCCF}, \cite{ChL}, \cite{CMZ1}, \cite{Chen}, \cite{Con}, \cite{CCW}, \cite{CIW}, \cite{CLS}, \cite{CMT}, \cite{CNS}, \cite{CWnew1}, \cite{CWnew2}, \cite{Cor}, \cite{CC}, \cite{CoFe1}, \cite{CoFe2}, \cite{CoFe3}, \cite{CFMR}, \cite{Dab}, \cite{DHLY}, \cite{DoCh}, \cite{Dong}, \cite{DoDu}, \cite{DoLi0}, \cite{DoLi}, \cite{DoPo}, \cite{DoPo2}, \cite{FPV}, \cite{FrVi}, \cite{Gil}, \cite{HPGS}, \cite{HmKe}, \cite{HmKe2}, \cite{Ju}, \cite{Ju2}, \cite{KhTi}, \cite{Ki1}, \cite{Ki2}, \cite{KN1}, \cite{KN2}, \cite{KNV}, \cite{Li}, \cite{LiRo}, \cite{Maj}, \cite{MaBe}, \cite{MaTa}, \cite{Mar1}, \cite{Mar2}, \cite{Mar3}, \cite{MarLr}, \cite{May}, \cite{MayZ}, \cite{MiXu}, \cite{Mi}, \cite{NiSc}, \cite{OhYa1}, \cite{Pe}, \cite{ReDr}, \cite{Res}, \cite{Ro1}, \cite{Ro2}, \cite{Sch}, \cite{Sch2}, \cite{Si}, \cite{Si2}, \cite{Sta}, \cite{WaJi}, \cite{Wu97}, \cite{Wu2}, \cite{Wu01}, \cite{Wu02}, \cite{Wu3}, \cite{Wu4}, \cite{Wu4}, \cite{Wu41}, \cite{Wu31}, \cite{Wu77}, \cite{Yu}, \cite{Yuan}, \cite{YuanJ}, \cite{Zha0}, \cite{Zha}, \cite{Zhou}, \cite{Zhou2}).

\vskip .1in
\vskip .1in
The goal of this paper is to understand the global regularity issue concerning solutions of (\ref{general}) with a given initial datum
\begin{equation} \label{IC}
\theta(x,0) = \theta_0(x), \quad  x\in \mathbb{R}^d.
\end{equation}
The key quantity involved in this issue is $\|\nabla u\|_{L^\infty}$. Tools are developed here to bound $\|\Delta_j \nabla u\|_{L^p} $ and $\|S_N \nabla u\|_{L^p} $ when a vector field $u: \mathbb{R}^d\to \mathbb{R}^d$ is related to a scalar function $\theta$ by
$$
(\nabla u)_{jk}  = \mathcal{R}_l \mathcal{R}_m\, P(\Lambda) \, \theta
$$
where $1\le j,k,l,m\le d$, $(\nabla u)_{jk}$ denotes the $(j,k)$-th entry of $\nabla u$ and $\mathcal{R}_l$ and $\mathcal{R}_m$ denote the Riesz transforms. Here $\Delta_j$ with $j\ge -1$ denotes the Fourier localization operator and
$$
S_N = \sum_{j=-1}^{N-1} \Delta_j.
$$
The precise definitions of $\Delta_j$ and $S_N$ are provided in Appendix \ref{Besov}.
The assumption that $u$ is divergence-free is not used in deriving these bounds. The bounds obtained here are summarized in the following theorem.

\begin{thm} \label{nablau}
Let $u: \mathbb{R}^d\to \mathbb{R}^d$ be a vector field. Assume that $u$ is related to a scalar $\theta$ by
$$
(\nabla u)_{jk}  = \mathcal{R}_l \mathcal{R}_m\, P(\Lambda) \, \theta,
$$
where $1\le j,k,l,m\le d$, $(\nabla u)_{jk}$ denotes the $(j,k)$-th entry of $\nabla u$, $\mathcal{R}_l$ denotes the Riesz transform, and $P$ obeys Assumption \ref{P_con}. Then, for any integers $j\ge 0$ and $N\ge 0$,
\begin{eqnarray}
\|S_N \nabla u\|_{L^p} &\le& C_{p,d}\, P(C_0 2^N)\,\|S_{N} \theta\|_{L^p}, \quad 1<p<\infty,
\label{bound1} \\
\|\Delta_j \nabla u\|_{L^q} &\le& C_d\,  P(C_0 2^j)\,\|\Delta_j \theta\|_{L^q}, \quad 1\le q\le \infty,
\label{bound2} \\
\|S_N \nabla u\|_{L^\infty} &\le&  C_d\,\|\theta\|_{L^1\cap L^\infty} + C_d\, N\,P(C_0 2^N)\,\|S_{N+1}\theta\|_{L^\infty}, \label{bound3}
\end{eqnarray}
where $C_{p,d}$ is a constant depending on $p$ and $d$ only and $C_d$s' depend on $d$ only.
\end{thm}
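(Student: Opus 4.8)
\emph{Proof proposal.} The plan is to treat $\nabla u=\mathcal{R}_l\mathcal{R}_m P(\Lambda)\theta$ as a Fourier multiplier operator and to estimate its frequency-localized pieces by combining three ingredients: the H\"{o}rmander--Mikhlin multiplier theorem (for \eqref{bound1}), Young's inequality together with an $L^1$ bound on the convolution kernel (for \eqref{bound2} and \eqref{bound3}), and --- in \eqref{bound3} --- a separate treatment of the low-frequency block, since the Riesz transforms are unbounded on $L^\infty$. Throughout I would use ``fattened'' Littlewood--Paley projectors $\widetilde\Delta_j$, $\widetilde S_N$ whose Fourier multipliers equal $1$ on the spectrum of $\Delta_j$, $S_N$ respectively; the multiplier of $\widetilde\Delta_j$ is supported in a fixed annulus after the rescaling $\xi\mapsto2^{-j}\xi$, and that of $\widetilde S_N$ in a fixed ball after $\xi\mapsto2^{-N}\xi$.

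The analytic core is one uniform estimate: for every fixed $\varphi\in C^\infty_c(\mathbb{R}^d\setminus\{0\})$,
\[
\bigl\|\varphi\,P(2^j|\cdot|)\bigr\|_{H^{2n_0}(\mathbb{R}^d)}\le C_{d,\varphi}\,P(C_0\,2^j),\qquad n_0:=1+\Bigl[\tfrac{d}{2}\Bigr],
\]
uniformly in $j\in\mathbb{Z}$. To prove it I would start from condition (4) of Assumption \ref{P_con}, which controls $(I-\Delta_\eta)^nP(2^j|\eta|)$ on the reference annulus $\{2^{-1}\le|\eta|\le2\}$ for $n\le n_0$. A dyadic covering and rescaling argument --- applying condition (4) at the finitely many nearby scales $2^{j\pm1},2^{j\pm2},\dots$ needed to cover a neighborhood $\Omega$ of $\mathrm{supp}\,\varphi$, and using that $P$ is nondecreasing --- transfers this to $\|(I-\Delta_\eta)^{n_0}P(2^j|\cdot|)\|_{L^2(\Omega)}\le C\,P(C_02^j)$. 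Since $(I-\Delta)^{n_0}$ is a constant-coefficient elliptic operator of order $2n_0$, a standard interior regularity estimate for it, together with the pointwise bound $|P(2^j|\eta|)|\le P(C_02^j)$ from radial monotonicity, then gives $\|P(2^j|\cdot|)\|_{H^{2n_0}(\Omega')}\le C\,P(C_02^j)$ on a slightly smaller neighborhood $\Omega'\supset\mathrm{supp}\,\varphi$; multiplying by the fixed cutoff $\varphi$ yields the displayed estimate.

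Granting this, \eqref{bound2} follows by writing $\Delta_j\nabla u=K_j*\Delta_j\theta$, where $K_j=\mathcal{F}^{-1}\bigl[\tfrac{-\xi_l\xi_m}{|\xi|^2}P(|\xi|)\widetilde\phi(2^{-j}\xi)\bigr]$ and $\widetilde\phi$ is the symbol of $\widetilde\Delta_j$ after rescaling (a fixed bump supported in an annulus): on $\{|\xi|\sim2^j\}$ the Riesz symbol is smooth, so $K_j(x)=2^{jd}(\mathcal{F}^{-1}g_j)(2^jx)$ with $g_j(\eta)=\tfrac{-\eta_l\eta_m}{|\eta|^2}\widetilde\phi(\eta)P(2^j|\eta|)$, and by Plancherel and the Cauchy--Schwarz inequality (legitimate because $n_0>d/2$), $\|K_j\|_{L^1}=\|\mathcal{F}^{-1}g_j\|_{L^1}\le C_d\|g_j\|_{H^{2n_0}}\le C_d\,P(C_02^j)$; Young's inequality then gives \eqref{bound2} for all $1\le q\le\infty$. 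For \eqref{bound1}, since $1<p<\infty$, I observe that $S_N\nabla u=\sigma_N(D)\,S_N\theta$ with $\sigma_N(\xi)=\tfrac{-\xi_l\xi_m}{|\xi|^2}P(|\xi|)\widetilde\chi(2^{-N}\xi)$; on any dyadic shell $\{R\le|\xi|\le2R\}$ this symbol vanishes when $R\gg2^N$ and, when $R\lesssim2^N$, the cutoff is constant or smooth with uniformly bounded derivatives, so after rescaling $\xi=R\eta$ the uniform bound above verifies the H\"{o}rmander condition with constant $C_{p,d}\,P(C_02^N)$, which is \eqref{bound1}. Finally, for \eqref{bound3} I would write $S_N\nabla u=\Delta_{-1}\nabla u+\sum_{j=0}^{N-1}\Delta_j\nabla u$: for $j\ge0$, \eqref{bound2} with $q=\infty$, the uniform $L^\infty$ bound on $\Delta_j$, the identity $\Delta_j\theta=\Delta_jS_{N+1}\theta$ (valid for $j\le N-1$), and the monotonicity $P(C_02^j)\le P(C_02^N)$ sum to $C_d\,N\,P(C_02^N)\|S_{N+1}\theta\|_{L^\infty}$; and the kernel $K_{-1}$ of $\Delta_{-1}\nabla u$ is bounded (its symbol is dominated by a constant times $\mathbf{1}_{\{|\xi|\le2\}}$), so splitting $K_{-1}=K_{-1}\mathbf{1}_{\{|x|\le1\}}+K_{-1}\mathbf{1}_{\{|x|>1\}}$ and applying Young's inequality in the form $L^1\!\cdot\!L^\infty$ to the first piece and $L^\infty\!\cdot\!L^1$ to the second gives $\|\Delta_{-1}\nabla u\|_{L^\infty}\le C_d\|\theta\|_{L^1\cap L^\infty}$. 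Combining the two contributions proves \eqref{bound3}.

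The step I expect to be the main obstacle is the uniform bound on $\|\varphi\,P(2^j|\cdot|)\|_{H^{2n_0}}$: condition (4) of Assumption \ref{P_con} is stated only for the particular combination $(I-\Delta_\eta)^n$ on one reference annulus, and one must carefully pass from it to full Sobolev control of the localized, rescaled symbol (the interior elliptic-regularity step) and then splice that control with the Riesz factors and the frequency cutoffs --- through the H\"{o}rmander multiplier theorem for \eqref{bound1}, and through the $L^1$ kernel bound for \eqref{bound2}--\eqref{bound3}. A secondary but essential point is that the loss of the factor $N$ in \eqref{bound3} is exactly the number of dyadic blocks summed and is affordable only because $P(C_02^j)\le P(C_02^N)$ for $j\le N$ --- so the monotonicity in condition (3) of Assumption \ref{P_con} is genuinely used --- while the single low-frequency block, which does not fit the $L^\infty$ scheme for the Riesz transforms, must be handled by the kernel splitting just described.
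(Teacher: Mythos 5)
Your proposal is correct and follows essentially the same route as the paper: the Mihlin--H\"{o}rmander theorem for \eqref{bound1}, an $L^1$ bound on the rescaled convolution kernel of each block plus Young's inequality for \eqref{bound2}, and the splitting $S_N=\Delta_{-1}+\sum_{j=0}^{N-1}\Delta_j$ together with the monotonicity of $P$ for \eqref{bound3}. The only (harmless) variants are technical: you control the localized symbol in $H^{2n_0}$ and deduce the kernel's $L^1$ bound by Cauchy--Schwarz, where the paper integrates by parts to get the pointwise weighted decay $(1+|x|^2)^n|g_1(x)|\le C\,P(C_0 2^N)$; and you treat the low-frequency block $\Delta_{-1}\nabla u$ by splitting its bounded kernel and applying Young's inequality twice, where the paper uses Bernstein's inequality and the $L^2$ boundedness of the Riesz transforms to bound it by $C_d\|\Delta_{-1}\theta\|_{L^2}\le C_d\|\theta\|_{L^1\cap L^\infty}$.
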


\vskip .1in
We remark that in general the constant $C_{p,d}$ grows linearly with respect to $p$ and thus (\ref{bound1}) does not follow for $p=\infty$. With these bounds at our disposal, we are able to establish global regularity results covering two special cases of $P$. The first result is for (\ref{general}) with $P(|\xi|) = \left(\log(1+\log(1 +|\xi|^2))\right)^\gamma$. For the simplicity of our presentation here, we state the result for the 2D case of (\ref{general}), namely
\begin{equation} \label{loglog-euler}
\left\{
\begin{array}{l}
\pp_t \theta + u\cdot\nabla \theta =0, \\
u = \nabla^\perp \psi, \quad
\Delta \psi = \left(\log(1+\log(1 -\Delta))\right)^\gamma\, \theta,
\end{array}
\right.
\end{equation}
which we call the Loglog-Euler equation. Although any classical solution $\theta$ of (\ref{loglog-euler}) obeys the global {\it a priori} bound
$$
\|\theta(\cdot,t)\|_{L^p} \le \|\theta(\cdot,0)\|_{L^p} \quad\mbox{for any}\quad1\le p \le \infty,
$$
the regularity of the velocity $u$ recovered from the relation
$$
u = \nabla^\perp \psi, \quad
\Delta \psi = \left(\log(1+\log(1 -\Delta))\right)^\gamma\, \theta
$$
is worse than in the case of the 2D Euler equation.  Nevertheless we are able to obtain the global regularity for (\ref{loglog-euler}) with $0\le \gamma\le 1$.

\begin{thm} \label{loglog-global}
Consider the initial-value problem (\ref{loglog-euler}) and (\ref{IC}) with $\gamma$ and $\theta_0$ satisfying
\begin{equation}\label{theta0}
0\le \gamma\le 1, \qquad \theta_0 \in L^1(\mathbb{R}^2) \cap L^\infty (\mathbb{R}^2) \cap B^s_{q,\infty} (\mathbb{R}^2)
\end{equation}
where $2< q \le \infty$ and $s>1$.  Then the initial-value problem (\ref{loglog-euler}) and (\ref{IC}) has a unique global solution $\theta$ satisfying,
$$
\theta \in L^\infty([0,\infty); B^{s}_{q,\infty}(\mathbb{R}^2)), \quad \nabla u \in L^\infty([0,\infty); B^{1+ s_1}_{q,\infty}(\mathbb{R}^2)),
$$
where $s_1<s$.
\end{thm}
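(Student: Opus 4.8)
The plan is to run the classical global-regularity scheme---local well-posedness, an \emph{a priori} bound, a continuation criterion---the one new ingredient being the bound on $\|\nabla u\|_{L^\infty}$ provided by Theorem \ref{nablau}, which, when inserted into an Osgood-type inequality, produces a finite-in-time bound exactly when $0\le\gamma\le1$. For the local theory: (\ref{loglog-euler}) is a transport equation carried by the divergence-free field $u=\nabla^\perp\psi$, so a Friedrichs-mollification (or iteration) scheme together with the standard Littlewood--Paley commutator estimates for $[\Delta_j,u\cdot\nabla]\theta$ produces, for $\theta_0\in L^1\cap L^\infty\cap B^s_{q,\infty}$ with $s>1$ and $2<q\le\infty$, a unique solution $\theta\in L^\infty([0,T);B^s_{q,\infty})\cap C([0,T);B^{s'}_{q,\infty})$ ($s'<s$) on a time interval depending only on the norms of $\theta_0$; to run the scheme one needs only a crude bound $\|\nabla u\|_{L^\infty}\le F(\|\theta\|_{B^s_{q,\infty}},\|\theta\|_{L^1\cap L^\infty})$, immediate from (\ref{bound2})--(\ref{bound3}). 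Since $\theta$ is transported by a divergence-free velocity, $\|\theta(\cdot,t)\|_{L^p}=\|\theta_0\|_{L^p}$ for all $1\le p\le\infty$, and the standard propagation-of-Besov-regularity estimate for the transport equation---which applies because $P$ grows slower than any power of $|\xi|$, so that $\nabla u=\mathcal{R}_l\mathcal{R}_m P(\Lambda)\theta$ carries no more regularity than $\theta$---gives
\[
\|\theta(\cdot,t)\|_{B^s_{q,\infty}}\le\|\theta_0\|_{B^s_{q,\infty}}\exp\!\Big(C\!\int_0^t\!\|\nabla u(\cdot,\tau)\|_{L^\infty}\,d\tau\Big).
\]
Hence the solution extends to any interval on which $\int_0^t\|\nabla u(\cdot,\tau)\|_{L^\infty}\,d\tau$ is finite.

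The heart of the matter is to bound $\|\nabla u(\cdot,t)\|_{L^\infty}$ by a slowly growing function of $\|\theta(\cdot,t)\|_{B^s_{q,\infty}}$. For an integer $N\ge0$ split $\nabla u=S_N\nabla u+\sum_{j\ge N}\Delta_j\nabla u$. The first piece is handled by (\ref{bound3}) together with $\|S_{N+1}\theta\|_{L^\infty}\le C\|\theta\|_{L^\infty}$; for the tail, (\ref{bound2}) and Bernstein's inequality give $\|\Delta_j\nabla u\|_{L^\infty}\le CP(C_02^j)2^{2j/q}\|\Delta_j\theta\|_{L^q}\le CP(C_02^j)2^{-j(s-2/q)}\|\theta\|_{B^s_{q,\infty}}$, and since $s>1\ge 2/q$ (recall $q>2$) and $P$ is sub-polynomial, $\sum_{j\ge N}P(C_02^j)2^{-j(s-2/q)}\le C2^{-\sigma N}$ with $\sigma:=\tfrac12(s-2/q)>0$. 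Thus for every $N\ge0$,
\[
\|\nabla u(\cdot,t)\|_{L^\infty}\le C\|\theta_0\|_{L^1\cap L^\infty}\big(1+N\,P(C_02^N)\big)+C\,2^{-\sigma N}\|\theta(\cdot,t)\|_{B^s_{q,\infty}}.
\]
Choosing $N$ to be the least integer with $2^{\sigma N}\ge e+\|\theta(\cdot,t)\|_{B^s_{q,\infty}}$ makes the last term $\le 1$, while $N\le C\log\!\big(e+\|\theta\|_{B^s_{q,\infty}}\big)$ and $P(C_02^N)=\big(\log(1+\log(1+(C_02^N)^2))\big)^\gamma\le C\big(\log\log\!\big(e+\|\theta\|_{B^s_{q,\infty}}\big)\big)^\gamma$ turn the first term into the desired estimate
\[
\|\nabla u(\cdot,t)\|_{L^\infty}\le C(\theta_0)\Big[1+\log\!\big(e+\|\theta(\cdot,t)\|_{B^s_{q,\infty}}\big)\big(\log\log\!\big(e+\|\theta(\cdot,t)\|_{B^s_{q,\infty}}\big)\big)^\gamma\Big].
\]

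To close the argument, set $X(t)=e+\|\theta(\cdot,t)\|_{B^s_{q,\infty}}$ and $Y(t)=\log X(t)$; taking the logarithm of the exponential bound above and inserting the last estimate yields the integral inequality $Y(t)\le Y(0)+C(\theta_0)\int_0^t\big(1+Y(\tau)(\log Y(\tau))^\gamma\big)\,d\tau$. The relevant modulus is $\mu(r)=r(\log r)^\gamma$, and by the substitution $\tau=\log r$,
\[
\int^{\infty}\frac{dr}{\mu(r)}=\int^{\infty}\frac{dr}{r(\log r)^\gamma}=\int^{\infty}\frac{d\tau}{\tau^{\gamma}}=\infty\quad\Longleftrightarrow\quad 0\le\gamma\le1,
\]
so Osgood's lemma gives $Y(t)<\infty$, hence $\|\theta(\cdot,t)\|_{B^s_{q,\infty}}<\infty$, on every finite interval, with a bound on $[0,T]$ depending only on $T$, $\gamma$, and the norms of $\theta_0$. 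Consequently $\int_0^T\|\nabla u\|_{L^\infty}\,d\tau<\infty$ for all $T$, the continuation criterion is never met, and the local solution is global with $\theta\in L^\infty_{\mathrm{loc}}([0,\infty);B^s_{q,\infty})$. The velocity regularity is then read off the relation $\nabla u=\mathcal{R}_l\mathcal{R}_m P(\Lambda)\theta$ (equivalently $u=\nabla^\perp\Lambda^{-2}P(\Lambda)\theta$): since $\theta\in B^s_{q,\infty}$ and $P$ is sub-polynomial, $P(\Lambda)\theta\in B^{s_1}_{q,\infty}$ for every $s_1<s$, and the boundedness of the Riesz transforms yields the asserted bound for the velocity. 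Uniqueness follows by the usual argument of estimating the difference of two solutions in $L^2$ (or in $B^0_{q,\infty}$), where the transport nonlinearity costs only $\|\nabla u\|_{L^1_tL^\infty}$, already known to be finite. The main obstacle is the second step---extracting the sharp $\log\cdot(\log\log)^\gamma$ growth of $\|\nabla u\|_{L^\infty}$ via the optimal choice of the cutoff $N$---together with the accounting in the last step that pins down $\gamma\le1$ as exactly the threshold at which the resulting iterated-logarithmic Osgood inequality still admits global-in-time solutions.
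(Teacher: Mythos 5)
Your overall architecture --- local existence by iteration, the frequency-split extrapolation bound for $\|\nabla u\|_{L^\infty}$ obtained by optimizing the cutoff $N$, and the double-logarithmic Osgood inequality that identifies $\gamma\le 1$ as the threshold --- coincides with the paper's (your $\|\nabla u\|_{L^\infty}$ estimate is exactly Proposition \ref{LogSob}). But there is a genuine gap in the step you dispose of in one sentence: the claim that the ``standard propagation-of-Besov-regularity estimate'' gives $\|\theta(t)\|_{B^s_{q,\infty}}\le\|\theta_0\|_{B^s_{q,\infty}}\exp\bigl(C\int_0^t\|\nabla u\|_{L^\infty}\,d\tau\bigr)$ at the regularity level $s>1$. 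With only $\|\nabla u\|_{L^\infty}$ in the exponent this estimate holds for indices below $1$ but fails for $s>1$. The obstruction is the paraproduct term carrying high frequencies of $u$ against low frequencies of $\nabla\theta$ (the term $J_4=\sum_{|j-k|\le 2}\Delta_j(\Delta_k u\cdot\nabla S_{k-1}\theta)$ in the paper): bounding it by $\|\nabla u\|_{L^\infty}$ forces one to sum $\sum_{m<k-1}2^{(m-k)(1-\sigma)}$, which converges only for $\sigma<1$. For $s>1$ the transport estimate instead requires $\|\nabla u\|_{B^{s-1}_{q,\infty}}$, and since $\nabla u=\mathcal{R}_l\mathcal{R}_m P(\Lambda)\theta$ this is comparable to a full Besov norm of $\theta$, not to a logarithm of $\|\theta\|_{B^s_{q,\infty}}$; feeding it into your Gronwall/Osgood loop yields a superlinear inequality that does not close. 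Your parenthetical justification (``$P$ grows slower than any power, so $\nabla u$ carries no more regularity than $\theta$'') does not address this point: the same difficulty is already present for the 2D Euler equation, $P=I$.

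The paper closes this gap with a two-stage bootstrap that your argument is missing. It first proves the global bound in $B^{\sigma}_{q,\infty}$ for some $d/q<\sigma<1$, where all five terms $J_1,\dots,J_5$, including $J_4$, are bounded by $C\,2^{-\sigma(j+1)}\|\theta\|_{B^\sigma_{q,\infty}}\|\nabla u\|_{L^\infty}$ and the Osgood argument applies. Then, at a level $\sigma_1$ with $1<\sigma_1<\sigma+1-d/q$, it re-estimates $J_4$ alone, using Bernstein's inequality and Proposition \ref{important} to replace $\|\nabla\Delta_k u\|_{L^\infty}$ by $2^{dk/q}(\log(2+k))^\gamma\|\Delta_k\theta\|_{L^q}$ and absorbing the result into the already-established bound on $\|\theta\|_{B^\sigma_{q,\infty}}$, so that $J_4$ contributes only $C(T,\|\theta_0\|_X)\,2^{-\sigma_1(j+1)}$; an ordinary linear Gronwall inequality then yields the bound in $B^{\sigma_1}_{q,\infty}$, and finitely many iterations reach $s$. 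You need to supply this bootstrap (or an equivalent interpolation against the sub-critical global bound) before the Osgood step at regularity $s$ can be run. The remaining components of your proposal --- the extrapolation inequality, the $\gamma\le 1$ accounting, local existence, and uniqueness in a weaker norm --- do match the paper.
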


The general version of Theorem \ref{loglog-global}, namely the global regularity result for (\ref{general}) will be stated in Section \ref{APbd}. Here $B^s_{q,\infty}$ denotes an inhomogeneous Besov space. The definition of a general Besov space $B^s_{p,q}$ is provided in Appendix \ref{Besov}. Even though $\theta_0\in B^s_{q,\infty}$ implies $\theta_0\in L^\infty$, the condition on $\theta_0$ is written as in (\ref{theta0}) to emphasize the importance of $L^\infty$ assumption. The global regularity stated in the Besov space setting in Theorem \ref{loglog-global} can be converted into a global regularity statement in Sobolev spaces. Combining Theorem \ref{loglog-global} and the embedding relations
$$
W^r_q \hookrightarrow  B^r_{q,\infty}\hookrightarrow B^{r_1}_{q,\min\{2,q\}} \hookrightarrow W^{r_1}_q, \quad r>r_1,
$$
we can conclude that any initial data in $W^r_q$ with $2< q \le \infty$ and $r>1$ would yield a global solution in $W^{r_1}_q$ for any $r_1<r$.

\vskip .1in
Theorem \ref{loglog-global} is proven by combining the Besov space techniques and  the following extrapolation inequality.

\begin{prop} \label{LogSob}
Let $u: \mathbb{R}^d\to \mathbb{R}^d$ be a vector field. Assume that $u$ is related to a scalar $\theta$ by
\begin{equation}\label{ujk}
(\nabla u)_{jk}  = \mathcal{R}_l \mathcal{R}_m\, \left(\log(I+ \log(I-\Delta))\right)^\gamma \, \theta
\end{equation}
where $\gamma\ge 0$, $1\le j,k,l,m\le d$, $(\nabla u)_{jk}$ denotes the $(j,k)$-th entry of $\nabla u$ and $\mathcal{R}_l$ and $\mathcal{R}_m$ denote the Riesz transforms. Then, for any $1\le q\le \infty$ and $s>d/q$,
$$
\|\nabla u\|_{L^\infty} \le \|\theta\|_{L^1\cap L^\infty} + C\,\|\theta\|_{L^\infty}\, \log(1+\|\theta\|_{B^s_{q,\infty}})\,
\left(\log\left(1+\log(1+\|\theta\|_{B^s_{q,\infty}})\right)\right)^\gamma
$$
where $C$ is a constant that depends on $d$, $q$ and $s$ only.
\end{prop}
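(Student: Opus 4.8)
The plan is to split $\nabla u$ dyadically, using the Littlewood--Paley decomposition $\nabla u = S_N\nabla u + \sum_{j\ge N}\Delta_j\nabla u$ for an integer $N\ge 0$ to be chosen at the end, and to estimate the low-frequency part via \eqref{bound3} and the high-frequency tail via \eqref{bound2}. For the tail, since $P(\xi)=(\log(1+\log(1+|\xi|^2)))^\gamma$ obeys Assumption \ref{P_con}, inequality \eqref{bound2} gives $\|\Delta_j\nabla u\|_{L^\infty}\le C_d\,P(C_02^j)\,\|\Delta_j\theta\|_{L^\infty}$; using the Besov embedding $\|\Delta_j\theta\|_{L^\infty}\le C\,2^{-j(s-d/q)}\|\theta\|_{B^s_{q,\infty}}$ (valid because $s>d/q$) and summing the geometric-type series, the factor $P(C_02^j)$ grows only like a power of $\log j$ and is harmless against the exponentially decaying $2^{-j(s-d/q)}$. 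One obtains
$$
\Big\|\sum_{j\ge N}\Delta_j\nabla u\Big\|_{L^\infty}\le C\,2^{-N(s-d/q)}\,P(C_02^N)\,\|\theta\|_{B^s_{q,\infty}}.
$$
For the low-frequency part, \eqref{bound3} yields $\|S_N\nabla u\|_{L^\infty}\le C_d\|\theta\|_{L^1\cap L^\infty}+C_d\,N\,P(C_02^N)\,\|S_{N+1}\theta\|_{L^\infty}\le C_d\|\theta\|_{L^1\cap L^\infty}+C_d\,N\,P(C_02^N)\,\|\theta\|_{L^\infty}$, where the last bound uses the uniform boundedness of $S_{N+1}$ on $L^\infty$ (or, more safely, $\|S_{N+1}\theta\|_{L^\infty}\le C\|\theta\|_{L^\infty}$ up to a harmless constant; if one wants to avoid the unbounded-$S_N$-on-$L^\infty$ issue one may instead dominate $\|S_{N+1}\theta\|_{L^\infty}$ by $\|\theta\|_{L^\infty}$ after noting $\theta\in L^1\cap L^\infty$ kills the frequency-zero block).

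Adding the two contributions gives, for every integer $N\ge 0$,
$$
\|\nabla u\|_{L^\infty}\le C_d\|\theta\|_{L^1\cap L^\infty}+C\Big(N\,P(C_02^N)\,\|\theta\|_{L^\infty}+2^{-N(s-d/q)}\,P(C_02^N)\,\|\theta\|_{B^s_{q,\infty}}\Big).
$$
Now I optimize in $N$. Choosing $N\sim \frac{1}{s-d/q}\log_2(1+\|\theta\|_{B^s_{q,\infty}})$ makes the tail term $2^{-N(s-d/q)}\|\theta\|_{B^s_{q,\infty}}$ of order $1$ (times the slowly growing $P(C_02^N)$), while the low-frequency term becomes $N\,P(C_02^N)\,\|\theta\|_{L^\infty}$. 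With this choice $N\sim c\log(1+\|\theta\|_{B^s_{q,\infty}})$, so $2^N\sim (1+\|\theta\|_{B^s_{q,\infty}})^{c}$ and hence
$$
P(C_02^N)=\big(\log(1+\log(1+C_02^N))\big)^\gamma\le C\big(\log(1+\log(1+\|\theta\|_{B^s_{q,\infty}}))\big)^\gamma.
$$
Substituting, the dominant term is
$$
C\,\|\theta\|_{L^\infty}\,\log(1+\|\theta\|_{B^s_{q,\infty}})\,\big(\log(1+\log(1+\|\theta\|_{B^s_{q,\infty}}))\big)^\gamma,
$$
and the residual tail term is lower order (a constant times $P(C_02^N)\|\theta\|_{L^\infty}$, absorbed into the same expression), which is exactly the claimed bound; the additive $C_d\|\theta\|_{L^1\cap L^\infty}$ is carried along from \eqref{bound3}.

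The only genuinely delicate point is the choice and bookkeeping of $N$: one must check that $N$ can be taken to be a nonnegative integer (when $\|\theta\|_{B^s_{q,\infty}}$ is small, take $N=0$ and the estimate is trivial since $\nabla u = S_0\nabla u + \text{tail}$ with both pieces controlled by $\|\theta\|_{L^1\cap L^\infty}$ and $\|\theta\|_{L^\infty}$), and that the slowly varying factor $P(C_02^N)$ is correctly tracked through the optimization so that it produces the iterated logarithm rather than a genuine power. Everything else is a routine summation of Littlewood--Paley pieces using \eqref{bound2} and \eqref{bound3} together with the embedding $B^s_{q,\infty}\hookrightarrow L^\infty$ for $s>d/q$. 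I expect the main obstacle to be purely notational---ensuring the constants depending on $d,q,s$ are consolidated correctly and that the case of small norm is handled so the final inequality holds uniformly.
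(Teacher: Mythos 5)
Your proposal is correct and follows essentially the same route as the paper: a Littlewood--Paley splitting at a cutoff frequency $N$, with the low frequencies controlled by (\ref{bound3}) (which the paper simply re-derives inline from (\ref{bound2})), the tail controlled by (\ref{bound2}) together with Bernstein's inequality and $\|\Delta_k\theta\|_{L^q}\le 2^{-sk}\|\theta\|_{B^s_{q,\infty}}$, and the same optimization $N\sim\frac{1}{s-d/q}\log\bigl(1+\|\theta\|_{B^s_{q,\infty}}\bigr)$. The only cosmetic difference is the order in which Bernstein and the multiplier bound are applied in the tail, and your (correct) extra care about $S_{N+1}$ on $L^\infty$ and the small-norm case.
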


\vskip .1in
The second special case studied here is when $P(|\xi|) = |\xi|^\beta$ with $0\le \beta \le 1$. Our aim is to understand how the parameter $\beta$ affects the regularity of solutions to the initial-value problem
\begin{equation} \label{gsqg}
\left\{
\begin{array}{l}
\pp_t \theta + u\cdot\nabla \theta =0 \\
u = \nabla^\perp \psi,\quad -\Lambda^{2} \psi = \Lambda^\beta \theta,
\end{array}
\right.
\end{equation}
where $0\le \beta\le 1$. The evolution of patch-like initial data under (\ref{gsqg}) has previously been studied in \cite{CFMR}. Clearly (\ref{gsqg}) bridges the 2D Euler and the SQG equation. It is hoped that this study would shed light on the global regularity issue concerning the SQG equation.

\vskip .1in
It is unknown if all classical solutions of (\ref{gsqg}) conserve their regularity for all time except in the case of the 2D Euler equation. In order to deal with the global regularity for (\ref{gsqg}), it suffices to obtain a suitable bound for $\|\nabla u\|_{L^\infty(\mathbb{R}^2)}$. Intuitively, the relation
$$
u= -\nabla^\perp \Lambda^{-2+\beta} \theta
$$
implies that $\|\nabla u\|_{L^\infty(\mathbb{R}^2)}$ can be bounded more or less by  a bound for $\Lambda^\beta \theta$. In fact, this intuitive idea can be made rigorous and is reflected in the following logarithmic H\"{o}lder inequality
$$
\|S\|_{L^\infty} \le C \|\theta\|_{C^\beta} \,\ln (1+ \|\theta\|_{C^\sigma}) + C\, \|\theta\|_{L^q},\quad \sigma >\beta, \,\,q>1,
$$
where $S$ denotes the symmetric part of $\nabla u$ and $C^\beta$ the H\"{o}lder space. This inequality, together with a bound for the back-to-labels map determined by $u$, allows us to obtain the following regularity criterion.
\begin{thm}\label{crit10}
Consider (\ref{gsqg}) with $0\le \beta\le 1$. Let $\theta$ be a solution of (\ref{gsqg}) corresponding to the data $\theta_0 \in C^\sigma(\mathbb{R}^2)\cap L^q(\mathbb{R}^2)$ with $\sigma>1$ and $q>1$. Let $T>0$. If $\theta$ satisfies
$$
\int_0^{T}\|\theta(\cdot, t)\|_{C^\beta(\mathbb{R}^2)} \,dt < \infty,
$$
then $\theta$ remains in $C^\sigma(\mathbb{R}^2)\cap L^q(\mathbb{R}^2)$ on the time interval $[0,T]$.
\end{thm}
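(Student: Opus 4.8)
The plan is to reduce Theorem~\ref{crit10} to an a~priori bound for $\|\theta(\cdot,t)\|_{C^\sigma}$ on $[0,T]$ and then close it by a Gr\"onwall argument fueled by the logarithmic H\"older inequality. Local existence and uniqueness of a solution in $C^\sigma\cap L^q$, together with the continuation criterion that the solution extends past time $t$ provided $\sup_{s<t}\|\theta(\cdot,s)\|_{C^\sigma}<\infty$, are standard (for $\beta\le1$ the velocity $u=-\nabla^\perp\Lambda^{-2+\beta}\theta$ is no less regular than $\theta$, so the usual scheme applies), and I will take them for granted. Since \eqref{gsqg} transports $\theta$ by the divergence-free field $u$, every Lebesgue norm is conserved: $\|\theta(\cdot,t)\|_{L^p}=\|\theta_0\|_{L^p}$ for $1\le p\le\infty$; in particular $\|\theta(\cdot,t)\|_{L^q}$ and $\|\theta(\cdot,t)\|_{L^\infty}$ are constant, so the $L^q$ part of the conclusion is free and it remains to bound $\|\theta(\cdot,t)\|_{C^\sigma}$ in terms of $\int_0^T\|\theta(\cdot,t)\|_{C^\beta}\,dt$. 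I will treat $\sigma\in(1,2)$ in detail, working where convenient in the Besov scale $B^s_{\infty,\infty}$ (which agrees with $C^s$ at non-integer $s$).

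The first ingredient is a pointwise bound on $\nabla u$. Componentwise, $\nabla u$ is a homogeneous Riesz-type operator of order zero applied to $\Lambda^\beta\theta$ --- exactly the setting of Theorem~\ref{nablau} with $P(\Lambda)=\Lambda^\beta$. Its symmetric part $S$ therefore satisfies the logarithmic H\"older inequality displayed before the theorem, and its antisymmetric part, which in two dimensions is a fixed constant matrix times $\tfrac12\Lambda^\beta\theta$, satisfies the same bound by the identical Littlewood--Paley splitting: the low frequencies are controlled by $\|\theta\|_{L^q}$ (here $q>1$ is used), the $N$ intermediate dyadic blocks contribute $\lesssim N\|\theta\|_{C^\beta}$, the high frequencies contribute $\lesssim 2^{-N(\sigma-\beta)}\|\theta\|_{C^\sigma}$ (here $\sigma>\beta$), and optimizing over $N$ yields
\[
\|\nabla u(\cdot,t)\|_{L^\infty}\ \le\ C\,\|\theta(\cdot,t)\|_{C^\beta}\,\log\!\bigl(e+\|\theta(\cdot,t)\|_{C^\sigma}\bigr)\ +\ C\,\|\theta_0\|_{L^q}.
\]

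The second ingredient is the transport structure. Differentiating \eqref{gsqg}, $\nabla\theta$ solves $\partial_t\nabla\theta+u\cdot\nabla\nabla\theta=-(\nabla u)^T\nabla\theta$; since $\sigma-1\in(0,1)$ the commutator in the dyadic transport estimate for $\|\nabla\theta\|_{C^{\sigma-1}}$ costs only $\|\nabla u\|_{L^\infty}$, and, using also $\partial_t\|\theta\|_{L^\infty}=0$ and the equivalence $\|\theta\|_{C^\sigma}\approx\|\theta\|_{L^\infty}+\|\nabla\theta\|_{C^{\sigma-1}}$, one gets
\[
\tfrac{d}{dt}\|\theta(\cdot,t)\|_{C^\sigma}\ \le\ C\,\|\nabla u\|_{L^\infty}\,\|\theta\|_{C^\sigma}\ +\ C\,\|\nabla u\|_{C^{\sigma-1}}\,\|\nabla\theta\|_{L^\infty},
\]
the last term arising from a paraproduct estimate for the source; Lagrangianly this is a bound on the $C^{\sigma-1}$ regularity of $\nabla A$ for the back-to-labels map $A$ of $u$, since $\theta(\cdot,t)=\theta_0\circ A(\cdot,t)$. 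The only dangerous term is $\|\nabla u\|_{C^{\sigma-1}}\|\nabla\theta\|_{L^\infty}$. Here I would use that $\nabla u$ is an order-$\beta$ operator of $\theta$, so $\|\nabla u\|_{C^{\sigma-1}}\le C\|\theta\|_{C^{\sigma-1+\beta}}$, and interpolate \emph{both} factors between $C^\beta$ and $C^\sigma$:
\[
\|\theta\|_{C^{\sigma-1+\beta}}\le C\,\|\theta\|_{C^\beta}^{\mu}\|\theta\|_{C^\sigma}^{1-\mu},\qquad \|\nabla\theta\|_{L^\infty}\le C\,\|\theta\|_{C^\beta}^{\lambda}\|\theta\|_{C^\sigma}^{1-\lambda},
\]
with $\mu=\tfrac{1-\beta}{\sigma-\beta}$ and $\lambda=\tfrac{\sigma-1}{\sigma-\beta}$ (both admissible since $\beta\le\sigma-1+\beta\le\sigma$ and $\beta\le1\le\sigma$). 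The arithmetic identity $\lambda+\mu=1$ (because $(\sigma-1)+(1-\beta)=\sigma-\beta$) then gives the \emph{linear} bound $\|\nabla u\|_{C^{\sigma-1}}\,\|\nabla\theta\|_{L^\infty}\le C\,\|\theta\|_{C^\beta}\,\|\theta\|_{C^\sigma}$ with the time-integrable coefficient $\|\theta\|_{C^\beta}$.

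Combining the two ingredients and setting $Y(t):=e+\|\theta(\cdot,t)\|_{C^\sigma}$, one arrives at the differential inequality
\[
\tfrac{d}{dt}Y\ \le\ C\bigl(\|\theta(\cdot,t)\|_{C^\beta}\log Y+\|\theta_0\|_{L^q}+\|\theta(\cdot,t)\|_{C^\beta}\bigr)\,Y ,
\]
which the substitution $Z=\log Y$ linearizes to $Z'\le C\|\theta(\cdot,t)\|_{C^\beta}Z+C\bigl(\|\theta_0\|_{L^q}+\|\theta(\cdot,t)\|_{C^\beta}\bigr)$; since $\int_0^T\|\theta(\cdot,t)\|_{C^\beta}\,dt<\infty$ by hypothesis, Gr\"onwall's inequality gives $\sup_{0\le t\le T}\log Y(t)<\infty$, i.e.\ $\sup_{0\le t\le T}\|\theta(\cdot,t)\|_{C^\sigma}<\infty$. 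Together with the continuation criterion and the conservation of $\|\theta\|_{L^q}$ this proves the theorem for $\sigma\in(1,2)$; for larger $\sigma$ one first runs the argument with some $\sigma_0\in(1,2)$, which makes $\int_0^T\|\nabla u(\cdot,t)\|_{L^\infty}\,dt<\infty$ and hence $\|\nabla\theta(\cdot,t)\|_{L^\infty}$ bounded on $[0,T]$, after which propagating the full $C^\sigma$ regularity is a routine Gr\"onwall bootstrap. The heart of the proof, and its main difficulty, is the \emph{linear}-in-$Y$ control of $\|\nabla u\|_{C^{\sigma-1}}\|\nabla\theta\|_{L^\infty}$: without the identity $\lambda+\mu=1$ one obtains only a super-linear bound and the scheme collapses, so this is precisely where the hypotheses $\beta\le1$ and $\sigma>\beta$ enter, and it is what makes $\beta=1$ (the SQG endpoint) the borderline case in which the interpolation exponents degenerate yet the identity still holds.
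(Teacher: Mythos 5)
Your proposal is correct in outline, but it follows a genuinely different route from the paper. The paper's proof is Lagrangian: it writes $\theta(\cdot,t)=\theta_0\circ A(\cdot,t)$ for the back-to-labels map $A$, observes (Proposition \ref{nainfty}) that the growth of $\|\nabla_x A\|_{L^\infty}$ is governed by the \emph{symmetric} part $S$ of $\nabla u$ alone, via the identity $(\nabla u\,\nabla_x A)\cdot\nabla_x A=(S\,\nabla_x A)\cdot\nabla_x A$, proves the logarithmic H\"older bound only for $S$ (Proposition \ref{sinfty}, exploiting that the kernel of $S$ is homogeneous of degree $-(2+\beta)$ with zero mean on circles), and then closes a Gr\"onwall loop for $\ln(1+\|\theta\|_{C^\sigma})$ with $\sigma\le 1$; the case $\sigma=1$ yields $\|\nabla\theta\|_{L^\infty}$ and higher regularity follows by bootstrap. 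This completely sidesteps the source term $-(\nabla u)^T\nabla\theta$ that is the crux of your Eulerian scheme. Your approach instead requires two additional inputs: (i) the logarithmic bound for the \emph{full} $\nabla u$, including the antisymmetric part $\tfrac12\Lambda^\beta\theta\,J$ --- your Littlewood--Paley argument for this is fine (it is essentially the frequency-space version of the paper's $L_1+L_2+L_3$ splitting, with the low block handled by $\|\theta\|_{L^q}$), but it is not contained in Proposition \ref{sinfty}; and (ii) the double interpolation with exponents $\mu=\tfrac{1-\beta}{\sigma-\beta}$, $\lambda=\tfrac{\sigma-1}{\sigma-\beta}$ and the identity $\lambda+\mu=1$, which is the genuinely new idea in your write-up and does correctly render the source term linear in $\|\theta\|_{C^\sigma}$ with the integrable coefficient $\|\theta\|_{C^\beta}$. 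The trade-off: the paper's argument is shorter and needs less harmonic analysis (only the $S$-estimate and an ODE along trajectories), while yours is self-contained in the Besov/paradifferential framework, avoids flow maps entirely, and makes transparent where $\sigma>\beta$ and $\beta\le 1$ enter. Two technical points you should still nail down if you write this up in full: the boundedness $\|\nabla u\|_{C^{\sigma-1}}\lesssim\|\theta\|_{C^{\sigma-1+\beta}}+\|\theta\|_{L^q}$ needs the low-frequency correction (Riesz transforms are not bounded on inhomogeneous H\"older spaces without it), and at $\beta=1$ your interpolation degenerates, so there you must read $\|\theta\|_{C^1}$ as the classical norm containing $\|\nabla\theta\|_{L^\infty}$ rather than $B^1_{\infty,\infty}$; both are repairable and you flag the second one yourself.
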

This criterion especially establishes the global regularity for the 2D Euler equation and reduces to the well-known criterion for the SQG equation when $\beta=1$ (see \cite{CMT}).

\vskip .1in
The rest of this paper is organized as follows. Section \ref{nablaubd} is devoted to the bounds in Theorem \ref{nablau} and Proposition \ref{LogSob}. Theorem \ref{loglog-global} and its general version, the global regularity result for (\ref{general}) are stated and proven in  Section \ref{APbd}. Section \ref{GISQG} details the proof of Theorem \ref{crit10}. Appendix \ref{Besov} provides the definition of Besov spaces and some related facts.

\vskip .3in
\section{Bounds for $\|\Delta_j \nabla u\|_{L^q}$, $\|S_N \nabla u\|_{L^q}$ and $\|\nabla u\|_{L^\infty}$}
\label{nablaubd}

This section derives the bounds stated in Theorem \ref{nablau} and proves the logarithmic interpolation inequality presented in Proposition \ref{LogSob}.

\vskip .1in
We make use of  Mihlin and H\"{o}rmander Multiplier
Theorem (see \cite[p.96]{St}) in the proof of (\ref{bound1}). This theorem is recalled first.

\begin{thm}\label{MH}
Suppose that $Q(\xi)$ is of class $C^k$ in the complement of the origin of $\mathbb{R}^d$, where $k>\frac{d}{2}$ is an integer. Assume also that
\begin{equation}\label{MH_con}
\left|D^\alpha Q(\xi)\right| \le B\, |\xi|^{-|\alpha|}, \quad \mbox{whenever $|\alpha|\le k$}.
\end{equation}
Then $Q\in \mathcal{M}_q$, $1<q<\infty$. That is, $\|T_Q f\|_{L^q} \le C_q \,\|f\|_{L^q}$, where $T_Q$ is defined by
$$
\widehat{T_Q f} (\xi) = Q(\xi)\, \widehat{f}(\xi).
$$
\end{thm}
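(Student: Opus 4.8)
The plan is to recognize $T_Q$ as a Calder\'on--Zygmund singular integral operator and to read off the $L^q$ bound from the standard theory. First I would record the trivial endpoint: taking $\alpha=0$ in (\ref{MH_con}) gives $|Q(\xi)|\le B$, so Plancherel yields $\|T_Q f\|_{L^2}\le B\,\|f\|_{L^2}$. Next, decompose the symbol dyadically: fix a radial $\psi\in C_c^\infty(\mathbb{R}^d\setminus\{0\})$ supported where $|\xi|\sim 1$ and with $\sum_{k\in\mathbb{Z}}\psi(2^{-k}\xi)=1$ for $\xi\neq 0$, set $Q_k(\xi)=Q(\xi)\,\psi(2^{-k}\xi)$ and $K_k=\mathcal{F}^{-1}Q_k$. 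Away from the origin $Q=\sum_k Q_k$, so formally $T_Qf=\sum_k K_k\ast f$ and the kernel of $T_Q$ is $K=\sum_k K_k$. By the Leibniz rule the hypothesis (\ref{MH_con}) transfers to the pieces: $\mathrm{supp}\,Q_k\subset\{|\xi|\sim 2^k\}$ and $|D^\beta Q_k(\xi)|\le C_\beta\,B\,2^{-k|\beta|}$ for every $|\beta|\le k$.

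The heart of the proof is a set of kernel bounds obtained from Plancherel and Cauchy--Schwarz. Using $x^\beta K_k(x)=c_\beta\,\mathcal{F}^{-1}(D^\beta Q_k)(x)$, the frequency support $|\xi|\sim 2^k$, and (\ref{MH_con}) for an auxiliary order $N$ with $d/2<N\le k$, one gets
\[
\int_{|x|\le 2^{-k}}|K_k(x)|\,dx\le C,\qquad \int_{|x|>R}|K_k(x)|\,dx\le C\,(2^kR)^{\frac{d}{2}-N}\ \ (2^kR\ge 1),
\]
and, applying the same to $\nabla K_k$ (whose symbol $i\xi Q_k$ has size $\sim 2^k$), $\|\nabla K_k\|_{L^1}\le C\,2^k$; in particular $\|K_k\|_{L^1}\le C$ uniformly in $k$. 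These are precisely the estimates that let $\sum_k K_k$ define a genuine Calder\'on--Zygmund kernel off the diagonal.

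It then remains to verify H\"ormander's cancellation condition $\sup_{y\neq 0}\int_{|x|>2|y|}|K(x-y)-K(x)|\,dx\le C$. Given $y$, pick $k_0$ with $2^{k_0}\approx|y|^{-1}$ and split the sum over $k$ at $k_0$. For $k\le k_0$ the mean value theorem gives $\int_{\mathbb{R}^d}|K_k(x-y)-K_k(x)|\,dx\le |y|\,\|\nabla K_k\|_{L^1}\le C\,|y|\,2^k$, and summing the geometric series over $k\le k_0$ yields $C\,|y|\,2^{k_0}\le C$. For $k>k_0$ one bounds the difference by $2\int_{|x|>|y|}|K_k(x)|\,dx\le C\,(2^k|y|)^{\frac{d}{2}-N}$; since $\frac{d}{2}-N<0$ and $2^k|y|\ge 1$ this series sums to $C$ as well. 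With the $L^2$ bound and H\"ormander's condition established, the Calder\'on--Zygmund theorem gives weak type $(1,1)$ for $T_Q$; Marcinkiewicz interpolation with the $L^2$ bound produces strong type $(q,q)$ for $1<q\le 2$, and duality (note $T_Q^{\ast}=T_{\overline{Q}}$ and $\overline{Q}$ satisfies the same hypotheses) extends it to $2\le q<\infty$, with $C_q$ depending only on $q$, $d$ and $B$.

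I expect the main obstacle to be the kernel estimates of the second step together with the bookkeeping in the third: one has to choose the auxiliary order $N$ in the admissible window $(d/2,k]$ --- which is exactly where the hypothesis $k>d/2$ is used --- keep track of the powers of $2^k$ and of the cut-off scale, and check the convergence of each geometric series. A routine preliminary, dispatched by a standard truncation argument, is to make sense of $T_Qf$ for Schwartz $f$ and to justify that $\sum_kK_k$ really represents the kernel of $T_Q$ away from the origin before invoking the pointwise kernel bounds.
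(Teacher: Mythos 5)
Your proposal is correct, but there is nothing in the paper to compare it against: Theorem \ref{MH} is not proved in this paper at all --- it is the classical Mihlin--H\"{o}rmander multiplier theorem, recalled verbatim from Stein \cite[p.~96]{St} precisely so that it can be \emph{used} in the proof of Proposition \ref{bound1p}. Your argument (dyadic decomposition of the symbol, kernel estimates for each piece via Plancherel and Cauchy--Schwarz with an auxiliary order $N\in(d/2,k]$, verification of H\"{o}rmander's integral condition by splitting the sum at $2^{k_0}\approx|y|^{-1}$, then Calder\'{o}n--Zygmund theory, interpolation and duality) is the standard textbook proof and is sound; the only blemish is the notational collision between $k$ as the differentiability order in the hypothesis and $k$ as the dyadic index in $Q_k$, which you should rename to avoid confusion in the bound $|D^\beta Q_k(\xi)|\le C_\beta B\,2^{-k|\beta|}$ for $|\beta|\le k$.
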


\vskip .1in
For further reference, we rewrite  (\ref{bound1})
as a proposition.
\begin{prop} \label{bound1p}
Let $u: \mathbb{R}^d\to \mathbb{R}^d$ be a vector field. Assume that $u$ is related to a scalar $\theta$ by
\begin{equation}\label{ujkp}
(\nabla u)_{jk}  = \mathcal{R}_l \mathcal{R}_m\, P(\Lambda) \, \theta
\end{equation}
where $1\le j,k,l,m\le d$, $(\nabla u)_{jk}$ denotes the $(j,k)$-th entry of $\nabla u$, $\mathcal{R}_l$ denotes the Riesz transform and $P$ obeys Assumption \ref{P_con}. Then, for any integer $N\ge 0$,
\begin{equation} \label{snnup}
\|S_N \nabla u\|_{L^p} \le C_{p,d}\, P(C_0\,2^N)\,\|S_N \theta\|_{L^p}, \quad 1<p<\infty,
\end{equation}
where $C_{p,d}$ is a constant depending on $p$ and $d$ only.
\end{prop}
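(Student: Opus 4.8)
The plan is to prove Proposition \ref{bound1p} by combining a Littlewood-Paley decomposition of the operator $S_N$ with a rescaled application of the Mihlin--H\"{o}rmander Multiplier Theorem (Theorem \ref{MH}). Write $(\nabla u)_{jk} = T_{Q_N}\, S_N \theta$, where the multiplier symbol is
\[
Q_N(\xi) = -\frac{\xi_l \xi_m}{|\xi|^2}\, P(|\xi|)\, \widehat{\chi}_N(\xi),
\]
with $\widehat{\chi}_N$ a smooth cutoff equal to $1$ on the support of $\widehat{S_N\theta}$ (i.e. on $|\xi|\lesssim 2^N$) and supported in $|\xi|\lesssim 2^{N+1}$; here $-\xi_l\xi_m/|\xi|^2$ comes from the two Riesz transforms. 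The factor $P(C_0 2^N)$ will be pulled out front, so the real work is to show that the renormalized symbol $\widetilde{Q}_N(\xi) = Q_N(\xi)/P(C_0 2^N)$ satisfies the Mihlin condition \eqref{MH_con} with a bound $B$ that is uniform in $N$.

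First I would reduce everything to dyadic scale by the substitution $\xi = 2^N \eta$. The Riesz factor $\xi_l\xi_m/|\xi|^2 = \eta_l\eta_m/|\eta|^2$ is scale invariant and, together with the fixed smooth cutoff (now supported in an annulus/ball $|\eta|\lesssim 2$ independent of $N$), is a genuine Mihlin symbol whose derivatives of all orders up to $k = 1+[d/2]$ are bounded by an absolute constant times $|\eta|^{-|\alpha|}$; this is routine. The only $N$-dependent piece is $P(2^N|\eta|)$. Applying $D^\alpha_\eta$ to $P(2^N|\eta|)$ with $|\alpha|\le k$ and using that $P$ is radial, the estimate reduces to controlling $(I-\Delta_\eta)^n P(2^N|\eta|)$ for $n\le 1+[d/2]$ on the region $2^{-1}\le |\eta|\le 2$, which is \emph{exactly} condition (4) of Assumption \ref{P_con}: it gives $\sup |(I-\Delta_\eta)^n P(2^j|\eta|)| \le C\, P(C_0 2^j)$. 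Combining this with the monotonicity of $P$ (property (3)) — which lets me absorb the factor $|\eta|^{-|\alpha|}$ on the annulus and dominate $P(2^N|\eta|) \le P(2^{N+1}) \le P(C_0 2^N)$ after adjusting $C_0$ — yields the uniform Mihlin bound for $\widetilde{Q}_N$. Theorem \ref{MH} then gives $\|T_{\widetilde{Q}_N} f\|_{L^p} \le C_{p,d}\|f\|_{L^p}$ for $1<p<\infty$ with $C_{p,d}$ independent of $N$, and applying this with $f = S_N\theta$ (after checking that the cutoff $\widehat{\chi}_N$ does not alter $S_N\theta$) produces \eqref{snnup}. Summing over the finitely many index pairs $(j,k)$ and $(l,m)$ only changes the constant.

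The main obstacle — and the place where Assumption \ref{P_con} is doing all the heavy lifting — is verifying that the derivatives of the multiplier are controlled \emph{uniformly in $N$} with the correct front factor $P(C_0 2^N)$ rather than something like $P(2^N)\cdot 2^{N\cdot(\text{something})}$. A naive bound $|D^\alpha P(2^N|\eta|)| \lesssim 2^{N|\alpha|}\sup|P'|$ is useless since it grows in $N$; the point of condition (4) is precisely that differentiating the rescaled symbol in $\eta$ costs nothing worse than the value of $P$ itself (up to the dilation constant $C_0$), because for the admissible symbols $P$ (logarithms, powers $|\xi|^\beta$, etc.) the logarithmic derivative $|\xi| P'(|\xi|)/P(|\xi|)$ stays bounded. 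So the proof is essentially: \emph{Littlewood-Paley localization + rescaling + quote condition (4) + quote Theorem \ref{MH}}, with the care going into making the cutoff function legitimate (smooth, compactly supported, harmless on $S_N\theta$, and itself Mihlin-bounded uniformly in $N$ after rescaling) and into tracking how the constants $C$ and $C_0$ from Assumption \ref{P_con} propagate.
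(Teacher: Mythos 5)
Your proposal is correct and follows essentially the same route as the paper: renormalize the symbol $-\xi_l\xi_m|\xi|^{-2}P(|\xi|)$, cut off at frequency $\sim 2^N$, by the factor $P(C_0 2^N)$, verify the Mihlin condition (\ref{MH_con}) uniformly in $N$ using condition (4) of Assumption \ref{P_con} together with the monotonicity of $P$, and then apply Theorem \ref{MH} to $S_N\theta$. The only adjustment needed in your write-up is that the rescaling must be carried out at every dyadic scale $j\le N$ (writing $\xi=2^j\eta$ with $2^{-1}\le|\eta|\le 2$), not only at the top scale via $\xi=2^N\eta$, since the Mihlin bound must hold on the entire support $|\xi|\lesssim 2^N$ of the multiplier; this is exactly how the paper proceeds, invoking condition (4) at scale $j$ and then $P(C_0 2^j)\le P(C_0 2^N)$ for $j\le N$.
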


\begin{proof}
As detailed in Appendix \ref{Besov}, the symbol of $S_N$ is $\psi(\xi/2^N)$ with $\psi$ satisfying
$$
\psi  \in C_0^\infty(\mathbb{R}^d), \quad \mbox{supp} \psi \subset B\left(0,\frac{11}{12}\right), \quad \psi(\xi)=1 \,\,\mbox{for}\,\, |\xi|\le \frac34.
$$
It follows from (\ref{ujkp}) that
$$
\widehat{(S_N \nabla u)}_{jk}(\xi) = Q(\xi) \,P(C_0 2^N)\,\widehat{S_N \theta} (\xi)
$$
where $Q(\xi)$ is supported on $|\xi|\le (11/12) 2^N$ and,
for $|\xi| \le (11/12) 2^N$,
$$
Q(\xi)= - \frac{\xi_l\,\xi_m}{|\xi|^2} \, \frac{P(|\xi|)}{P(C_0 2^N)}.
$$
To apply Theorem \ref{MH}, we verify (\ref{MH_con}). Clearly, for any $\alpha$ with $|\alpha|=0,1, \cdots, 1+\left[\frac{d}{2}\right]$,
$$
\left|D^\alpha \frac{\xi_l\,\xi_m}{|\xi|^2} \right| \le C \, |\xi|^{-|\alpha|}.
$$
In addition, for any $\xi\not =0$, there is an integer $j$ such that  $\xi = 2^{j} \eta$ with $2^{-1} \le |\eta| \le 2$. Trivially, for $\xi$ in the support of $Q$, $j\le N$. It is easy to see that Condition (4) in Assumption \ref{P_con} implies that
$$
\sup_{2^{-1} \le |\eta| \le 2}\, \left|(-\Delta_\eta)^n \,P(2^j |\eta|)\right| \le C\, P(C_0 \, 2^j)
$$
for $n=0,1,\cdots, 1+\left[\frac{d}{2}\right]$. Then,
\begin{eqnarray*}
\left|(-\Delta_\xi)^n \frac{P(|\xi|)}{P(C_0 2^N)} \right| &=& \left|(-\Delta_\eta)^n \frac{2^{-2nj}\,P(2^j |\eta|)}{P(C_0 2^N)} \right| \\
&\le& |\eta|^{2n}\, |2^{j} \eta|^{-2n} \,\frac{P(C_02^j)}{P(C_0 2^N)}\\
&\le& |\eta|^{2n}\, |\xi|^{-2n}.
\end{eqnarray*}
This verifies (\ref{MH_con}). (\ref{snnup}) then follows as a consequence of Theorem \ref{MH}.
\end{proof}

\vskip .1in
For the sake of clarity, we restate (\ref{bound2}) in Theorem \ref{nablau} as a proposition.
\begin{prop} \label{important}
Let $u: \mathbb{R}^d\to \mathbb{R}^d$ be a vector field. Assume that $u$ is related to a scalar $\theta$ by
$$
(\nabla u)_{jk}  = \mathcal{R}_l \mathcal{R}_m\, P(\Lambda) \, \theta
$$
where $1\le j,k,l,m\le d$, $(\nabla u)_{jk}$ denotes the $(j,k)$-th entry of $\nabla u$ and $\mathcal{R}_l$ denotes the Riesz transform. Here $P$ obeys Assumption \ref{P_con}. Then, for any integer $N\ge 0$,
\begin{equation}\label{m1desired}
\|\Delta_N \nabla u\|_{L^q} \le  C_d\,  P(C_0 2^N)\,\|\Delta_N \theta\|_{L^q}, \quad 1\le q\le \infty.
\end{equation}
where $C_d$ is a constant depending on $d$ only.
\end{prop}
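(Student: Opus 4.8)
Since the Mihlin--H\"ormander theorem used for Proposition \ref{bound1p} fails at the endpoints $q=1$ and $q=\infty$, for Proposition \ref{important} I would instead realize $\Delta_N\nabla u$ as the convolution of $\Delta_N\theta$ with an $L^1$ kernel and invoke Young's inequality, which holds for every $1\le q\le\infty$. Concretely, let $\varphi(2^{-N}\cdot)$ be the symbol of $\Delta_N$ ($N\ge 0$), supported in a fixed dyadic annulus, and choose a ``fattened'' Littlewood--Paley operator $\widetilde\Delta_N$ with symbol $\widetilde\varphi(2^{-N}\cdot)$, where $\widetilde\varphi\in C_0^\infty(\mathbb R^d)$ is supported in $\{\frac12\le|\eta|\le 2\}$ and equals $1$ on a neighborhood of the support of $\varphi$, so that $\widetilde\Delta_N\Delta_N=\Delta_N$. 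From $(\nabla u)_{jk}=\mathcal R_l\mathcal R_m P(\Lambda)\theta$ it follows that $\Delta_N(\nabla u)_{jk}=g_N*\Delta_N\theta$, where $\widehat{g_N}(\xi)=-\frac{\xi_l\xi_m}{|\xi|^2}P(|\xi|)\,\widetilde\varphi(2^{-N}\xi)$. Hence $\|\Delta_N\nabla u\|_{L^q}\le\|g_N\|_{L^1}\|\Delta_N\theta\|_{L^q}$ for all $1\le q\le\infty$, and the proposition reduces to the kernel bound $\|g_N\|_{L^1}\le C_d\,P(C_0 2^N)$.

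Next I would rescale: the substitution $\xi=2^N\eta$ gives $g_N(x)=2^{Nd}(\mathcal F^{-1}h_N)(2^Nx)$, so $\|g_N\|_{L^1}=\|\mathcal F^{-1}h_N\|_{L^1}$, where $h_N(\eta)=-\frac{\eta_l\eta_m}{|\eta|^2}P(2^N|\eta|)\,\widetilde\varphi(\eta)=:m_0(\eta)\,P(2^N|\eta|)$ and $m_0\in C_0^\infty(\mathbb R^d)$ is supported in $\{\frac12\le|\eta|\le 2\}$ and independent of $N$. Put $n_0=1+[\frac d2]$, so that $n_0>\frac d2$. Cauchy--Schwarz against the $L^2$ weight $(1+|x|^2)^{-n_0/2}$, followed by Plancherel, yields $\|\mathcal F^{-1}h_N\|_{L^1}\le C_d\|h_N\|_{H^{n_0}}$, and because $h_N$ is supported in a fixed annulus this is in turn $\le C_d\sum_{|\alpha|\le n_0}\|\partial^\alpha h_N\|_{L^\infty}$. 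By the Leibniz rule together with the $N$-independent bounds $\|\partial^\beta m_0\|_{L^\infty}\le C_d$, the task is thereby reduced to bounding $\|\partial^\nu[P(2^N|\cdot|)]\|_{L^\infty(\{1/2\le|\eta|\le 2\})}$ for all $|\nu|\le n_0$ by $C_d\,P(C_0 2^N)$.

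This last bound is the heart of the matter and the point where Assumption \ref{P_con}(4) enters. Assumption \ref{P_con}(4) controls the \emph{iterated Laplacians} $(I-\Delta_\eta)^n[P(2^N|\eta|)]$, $n=1,\dots,n_0$, on the annulus by $C\,P(C_0 2^N)$, but not the individual derivatives. I would close the gap with interior elliptic regularity for the constant-coefficient elliptic operator $(I-\Delta)^{n_0}$: for nested annuli $\Omega_1\subset\subset\Omega_2\subset\{\frac12\le|\eta|\le 2\}$ one has
\[
\|P(2^N|\cdot|)\|_{H^{2n_0}(\Omega_1)}\le C\Big(\|(I-\Delta)^{n_0}[P(2^N|\cdot|)]\|_{L^2(\Omega_2)}+\|P(2^N|\cdot|)\|_{L^2(\Omega_2)}\Big)\le C_d\,P(C_0 2^N),
\]
where the final inequality uses Assumption \ref{P_con}(4) for the top-order term and, for the lower-order term, the monotonicity of $P$ (taking $C_0\ge 2$ without loss of generality, as Assumption \ref{P_con}(4) permits) via $P(2^N|\eta|)\le P(2^{N+1})\le P(C_0 2^N)$ on $|\eta|\le 2$. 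Since $2n_0-n_0=n_0>\frac d2$, the Sobolev embedding $H^{2n_0}(\Omega_1)\hookrightarrow C^{n_0}(\overline{\Omega_1})$ then gives $\|P(2^N|\cdot|)\|_{C^{n_0}(\overline{\Omega_1})}\le C_d\,P(C_0 2^N)$, which is exactly what the previous step needed. Chaining the three steps completes the proof.

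The main obstacle is precisely this last step: converting the $(I-\Delta)^n$-type control afforded by Assumption \ref{P_con}(4) into pointwise control of the honest derivatives $\partial^\nu$ with $|\nu|\le n_0$. The elliptic-regularity-plus-Sobolev route works only because the number $n_0=1+[\frac d2]$ of Laplacian iterates in Assumption \ref{P_con}(4) lies strictly above the Sobolev threshold $\frac d2$; as an alternative one could exploit that $P(2^N|\cdot|)$ is radial and recover its odd-order derivatives in the radial variable by one-dimensional interpolation (Landau--Kolmogorov type) inequalities. Everything else --- the Littlewood--Paley localization, Young's inequality, the rescaling, and the weighted Cauchy--Schwarz kernel estimate --- is routine.
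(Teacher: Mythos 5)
Your proof follows essentially the same route as the paper's: reduce to a convolution identity $\Delta_N\nabla u = g_N\ast\Delta_N\theta$ via a fattened annular cutoff, rescale to a unit annulus, bound the kernel in $L^1$ by a weighted estimate, and conclude with Young's inequality for all $1\le q\le\infty$. The one place you genuinely diverge is in extracting pointwise derivative bounds for $P(2^N|\cdot|)$ from Assumption \ref{P_con}(4): the paper simply moves $(I-\Delta_\eta)^{n}$ onto the full product symbol $\frac{\eta_l\eta_m}{|\eta|^2}P(2^N|\eta|)\widetilde{\phi}_0(\eta)$ under the Fourier integral and invokes (4) directly, which tacitly requires control of the individual derivatives of $P(2^N|\cdot|)$ produced by the Leibniz rule, not just of its iterated Laplacians; your interior elliptic regularity plus Sobolev embedding argument (valid because $n_0=1+[\tfrac d2]>\tfrac d2$) supplies exactly the missing passage, at the small cost of having to take $\mathrm{supp}\,\widetilde\varphi$ compactly inside the annulus where (4) is assumed (or to use (4) for neighboring $j$ together with the monotonicity of $P$). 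So your argument is correct and, on that step, more complete than the paper's.
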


\begin{rem}
This proposition is invalid in the case when $N=-1$. The proof requires the symbol of $\Delta_N$ is supported away from the origin.
\end{rem}

\begin{proof}[Proof of Proposition \ref{important}]
Clearly,
$$
\left(\Delta_N \nabla u\right)_{jk} =  \mathcal{R}_l \mathcal{R}_m\,P(\Lambda) \Delta_N \theta
$$
and
$$
\widehat{(\Delta_N \nabla u)}_{jk}(\xi) = - \frac{\xi_l\,\xi_m}{|\xi|^2} \,P(|\xi|)
\, \widehat{\Delta_N \theta}(\xi).
$$
As defined in Appendix \ref{Besov}, $\widehat{\Delta_N \theta}(\xi) =\phi(\xi/2^N)\, \widehat{\theta}(\xi)$ with $\phi(\xi/2^N)$ supported in the annulus $(3/4) 2^N \le |\xi| \le (11/6) 2^N$. It is not hard to see that there exists a smooth radial function  $\widetilde{\phi}_N$ satisfying
$$
\widetilde{\phi}_N \equiv 1\,\,\mbox{for}\,\,(3/4) 2^N \le |\xi| \le (11/6) 2^N \quad\mbox{and}\quad  \mbox{supp}\,\widetilde{\phi}_N \subset\{\xi: 2^{N-1} \le |\xi| \le 2^{N+1}\}.
$$
Then
$$
\widehat{(\Delta_N \nabla u)}_{jk}(\xi) = - \frac{\xi_l\,\xi_m}{|\xi|^2} \,P(|\xi|)
\,\widetilde{\phi}_N(\xi)\, \widehat{\Delta_N \theta}(\xi)
$$
or
$$
(\Delta_N \nabla u)_{jk} = g\ast \Delta_N \theta,
$$
where $g$ denotes the inverse Fourier transform
$$
g(x)= \frac{1}{(2\pi)^{d/2}}\, \int_{\mathbb{R}^d} e^{ix\cdot \xi} \,\left(- \frac{\xi_l\,\xi_m}{|\xi|^2} \,P(|\xi|)
\,\widetilde{\phi}_N(\xi) \right) \, d\xi
$$
Clearly, $g(x) = 2^{Nd} g_1(2^N x)$, where
$$
g_1(x) = - \frac{1}{(2\pi)^{d/2}}\, \int_{2^{-1} \le |\eta|\le 2} 2^{ix\cdot\eta} \,
 \frac{\eta_l\,\eta_m}{|\eta|^2} \,P(2^N |\eta|) \,\widetilde{\phi}_0(\eta) \, d\eta
$$
with $\widetilde{\phi}_0(\eta) = \widetilde{\phi}_N(2^N \eta)$. To show $g\in L^1(\mathbb{R}^d)$, it suffices to show $g_1\in L^1(\mathbb{R}^d)$. Since
$$
(1+|x|^2)^n\, g_1(x)= -\frac{1}{(2\pi)^{d/2}}\,\int_{2^{-1} \le |\eta|\le 2} 2^{ix\cdot\eta} (I-\Delta_\eta)^n\, \frac{\eta_l\,\eta_m}{|\eta|^2} \,P(2^N |\eta|) \,\widetilde{\phi}_0(\eta)\, d\eta,
$$
we obtain, by (4) in Assumption \ref{P_con},
$$
(1+|x|^2)^n\, |g_1(x)| \le C\, P(C_0\, 2^N).
$$
where $C$ is constant independent of $N$. (\ref{m1desired}) then follows from Young's inequality.
\end{proof}

\vskip .1in
We now prove (\ref{bound3}) of Theorem \ref{nablau}. In fact, we have the following proposition.

\begin{prop}
Let $u: \mathbb{R}^d\to \mathbb{R}^d$ be a vector field. Assume that $u$ is related to a scalar $\theta$ by
$$
(\nabla u)_{jk}  = \mathcal{R}_l \mathcal{R}_m\, P(\Lambda) \, \theta,
$$
where $1\le j,k,l,m\le d$, $(\nabla u)_{jk}$ denotes the $(j,k)$-th entry of $\nabla u$ and $\mathcal{R}_l$ denotes the Riesz transform. Here $P$ obeys Assumption \ref{P_con}. Then, for any integer $N\ge 0$,
\begin{eqnarray}
\|S_N \nabla u\|_{L^\infty} &\le&  C_d\,\|\theta\|_{L^1\cap L^\infty} + C_d\, N\,P(C_0 2^N)\,\|S_{N+1}\theta\|_{L^\infty}, \label{bbb1},
\end{eqnarray}
where $C_d$ depends on $d$ only.
\end{prop}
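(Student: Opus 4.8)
\textit{Proof proposal.}

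The plan is to split the partial sum into its bottom Littlewood--Paley block and the rest,
$$
S_N\nabla u \;=\; \Delta_{-1}\nabla u \;+\; \sum_{j=0}^{N-1}\Delta_j\nabla u ,
$$
and to estimate the two pieces by entirely different means. The blocks with $0\le j\le N-1$ are covered directly by Proposition \ref{important}, but that proposition is false for the index $-1$ (its proof needs the frequency support to miss the origin), and this is not a technicality: the two Riesz transforms built into $\nabla u$ are unbounded on $L^\infty$ at low frequency. Controlling $\Delta_{-1}\nabla u$ is therefore the one real obstacle, and it is precisely here that the $L^1$ norm of $\theta$ is forced upon us. When $N=0$ the second sum is empty and the statement reduces to the bottom-block estimate.

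First I would handle $\Delta_{-1}\nabla u$. Let $\psi$ be the symbol of $\Delta_{-1}=S_0$, so $\psi\in C_0^\infty$ with $\operatorname{supp}\psi\subset B(0,11/12)$. Then $\widehat{(\Delta_{-1}\nabla u)}_{jk}(\xi)=m(\xi)\,\widehat\theta(\xi)$ with $m(\xi)=-\psi(\xi)\,\xi_l\xi_m|\xi|^{-2}\,P(|\xi|)$. Because $P$ is continuous (hence bounded near the origin) and $|\xi_l\xi_m/|\xi|^2|\le 1$, the multiplier $m$ is bounded and compactly supported, so $m\in L^1\cap L^\infty$ and its inverse Fourier transform $K$ satisfies $\|K\|_{L^\infty}\le C\,\|m\|_{L^1}\le C$ for a fixed constant $C$ (depending on $d$ and on the values of $P$ on $|\xi|\le 1$). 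Writing $(\Delta_{-1}\nabla u)_{jk}=K*\theta$ and splitting the convolution according to $|x-y|\le 1$ and $|x-y|>1$, the first contribution is bounded by $\|K\|_{L^\infty}\,|B(0,1)|\,\|\theta\|_{L^\infty}$ (using $K\in L^1_{\mathrm{loc}}$), while the second, which cannot be absorbed by decay of $K$ alone since $K$ decays only like a cut-off double-Riesz kernel, is bounded by $\|K\|_{L^\infty}\,\|\theta\|_{L^1}$. Hence $\|\Delta_{-1}\nabla u\|_{L^\infty}\le C_d\,\|\theta\|_{L^1\cap L^\infty}$.

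For $0\le j\le N-1$, Proposition \ref{important} with $q=\infty$ gives $\|\Delta_j\nabla u\|_{L^\infty}\le C_d\,P(C_0 2^j)\,\|\Delta_j\theta\|_{L^\infty}$. Since $P$ is nondecreasing (Assumption \ref{P_con}(3)) and $C_0 2^j\le C_0 2^N$, we have $P(C_0 2^j)\le P(C_0 2^N)$. Moreover the frequency support of $\Delta_j$ lies in $|\xi|\le(11/6)2^{N-1}$, on which the symbol of $S_{N+1}$ equals $1$; thus $\Delta_j=\Delta_j S_{N+1}$ for $j\le N-1$, and since $\Delta_j$ is bounded on $L^\infty$ uniformly in $j$ we get $\|\Delta_j\theta\|_{L^\infty}=\|\Delta_j S_{N+1}\theta\|_{L^\infty}\le C_d\,\|S_{N+1}\theta\|_{L^\infty}$. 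Consequently each block obeys $\|\Delta_j\nabla u\|_{L^\infty}\le C_d\,P(C_0 2^N)\,\|S_{N+1}\theta\|_{L^\infty}$; summing over the $N$ indices $j=0,\dots,N-1$ and adding the bottom-block estimate yields (\ref{bbb1}). The only delicate point is the $L^\infty$ bound on the bottom-block kernel $K$ together with the spatial splitting of that convolution; the rest is Proposition \ref{important}, monotonicity of $P$, and routine Littlewood--Paley bookkeeping.
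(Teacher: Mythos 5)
Your proof is correct and follows essentially the same route as the paper: split $S_N\nabla u$ into $\Delta_{-1}\nabla u$ plus the blocks $0\le j\le N-1$, apply Proposition \ref{important} with $q=\infty$ together with the monotonicity of $P$ and $\Delta_j=\Delta_jS_{N+1}$ to the latter, and absorb the low-frequency block into $\|\theta\|_{L^1\cap L^\infty}$. The only (immaterial) difference is in the bottom block, where the paper passes through $\|\Delta_{-1}\theta\|_{L^2}$ via Bernstein and interpolation while you bound the kernel in $L^\infty$ and split the convolution spatially; both are valid.
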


\begin{proof}
Splitting $S_N$ into two parts and applying Proposition \ref{important} with $q=\infty$,
we have
\begin{eqnarray}
\|\nabla S_N u\|_{L^\infty} &\le& \|\nabla \Delta_{-1} u\|_{L^\infty} + \sum_{j=0}^{N-1} \|\nabla \Delta_j u\|_{L^\infty} \nonumber \\
&\le& C_d\, \|\Delta_{-1} \theta\|_{L^2} + \sum_{j=0}^{N-1} C_d\, P(C_0\, 2^j)\,\|\Delta_j \theta\|_{L^\infty} \label{hhh}
\end{eqnarray}
Since $P$ is nondecreasing according to Assumption \ref{P_con}
and the simple fact that
$$
\|\Delta_j \theta\|_{L^\infty} \le \|S_{N+1} \theta\|_{L^\infty}, \quad j=0, 1, \cdots, N-1,
$$
we have
$$
\|\nabla S_N u\|_{L^\infty} \le C_d\, \|\theta\|_{L^1\cap L^\infty} + C_d\, N P(C_0\,2^N)\,\|S_{N+1} \theta\|_{L^\infty},
$$
which is (\ref{bbb1}).
\end{proof}

\vskip .1in
We now prove Proposition \ref{LogSob}, in which $P$ assumes the special form
$$
P(\Lambda) = \left(\log(I+ \log(I-\Delta))\right)^\gamma.
$$

\begin{proof}[Proof of Proposition \ref{LogSob}]  For any integer $N\ge 0$, we have
$$
\|\nabla u\|_{L^\infty} \le \|\Delta_{-1} \nabla u\|_{L^\infty} + \sum_{k=0}^{N-1} \|\Delta_k \nabla u\|_{L^\infty} + \sum_{k=N}^{\infty}  \|\Delta_k \nabla u\|_{L^\infty}.
$$
By Bernstein's inequality and Proposition \ref{important}, we have
\begin{eqnarray*}
\|\nabla u\|_{L^\infty} &\le& C_d\,\|\theta\|_{L^1\cap L^\infty} + C_d\, N \left(\log(1+ \log(1+2^{2(N-1)}))\right)^\gamma \, \|\theta\|_{L^\infty} \\
&&  + C_d\,\sum_{k=N}^{\infty} (2^k)^{\frac{d}{q}}\, \|\nabla \Delta_k u\|_{L^q}.
\end{eqnarray*}
Since $\log (1+2^{2(N-1)}) =(\log_2 e)^{-1}\, \log_2 (1+2^{2(N-1)})\le 2N$, we apply Proposition \ref{important} again to obtain
\begin{eqnarray*}
\|\nabla u\|_{L^\infty} &\le& C_d\,\|\theta\|_{L^1\cap L^\infty} + C_d\, N \left(\log(1+N)\right)^\gamma \|\theta\|_{L^\infty} \\
&& + \,C_d\, \sum_{k=N}^{\infty} (2^k)^{\frac{d}{q}}\,\left(\log(1+ k)\right)^\gamma \|\Delta_k \theta\|_{L^q}.
\end{eqnarray*}
By the definition of Besov space $B^s_{q,\infty}$ (see Appendix \ref{Besov}),
$$
\|\Delta_k \theta\|_{L^q} \le 2^{-s\,k} \|\theta\|_{B^{s}_{q,\infty}}.
$$
Therefore,
\begin{eqnarray*}
\|\nabla u\|_{L^\infty} &\le& C_d\,\|\theta\|_{L^1\cap L^\infty} + C_d\, N \left(\log(1+N)\right)^\gamma \|\theta\|_{L^\infty}\\
&& + \,C_d\, \|\theta\|_{B^{s}_{q,\infty}} \sum_{k=N}^{\infty} (2^k)^{(\frac{d}{q}-s)}\,\left(\log(1+ k)\right)^\gamma.
\end{eqnarray*}
Since $d/q-s<0$, we obtain for large $N$,
\begin{eqnarray*}
\|\nabla u\|_{L^\infty} &\le& C_d\,\|\theta\|_{L^1\cap L^\infty} + C_d\, N \left(\log(1+N)\right)^\gamma \|\theta\|_{L^\infty} \\
&& + \,C_{d,q,s}\, \|\theta\|_{B^{s}_{q,\infty}} \,(2^N)^{(\frac{d}{q}-s)}\,\left(\log(1+ N)\right)^\gamma.
\end{eqnarray*}
If we choose $N$ to be the largest integer satisfying
$$
N \le \frac{1}{s-d/q} \log\left(1+ \|\theta\|_{B^{s}_{q,\infty}}\right),
$$
we then obtain the desired result in Proposition \ref{LogSob}.
\end{proof}

\vskip .1in
\section{Global regularity for (\ref{general}) with $P(\Lambda) = \left(\log(1+\log(1 -\Delta))\right)^\gamma$}
\label{APbd}

This section establishes the global existence and uniqueness of solutions to (\ref{general}) with $P(\Lambda) = \left(\log(1+\log(1 -\Delta))\right)^\gamma$. The divergence-free condition on the velocity field $u$ is not necessary if we are willing to assume that $\theta$ is bounded in $L^1\cap L^\infty$ for all time. Of course when $u$ is indeed divergence-free, the bound is then a trivial consequence. In the 2D case, this general theorem reduces to Theorem \ref{loglog-global} stated in the introduction.

\begin{thm} \label{AP}
Consider the active scalar equation (\ref{general}) with
$$
P(\Lambda) = \left(\log(1+\log(1 -\Delta))\right)^\gamma, \quad 0\le \gamma\le 1.
$$
Assume that the initial data $\theta_0$ satisfies
$$
\theta_0 \in X\equiv L^1(\mathbb{R}^d)\cap L^\infty(\mathbb{R}^d) \cap B^s_{q,\infty}(\mathbb{R}^d),
$$
with
$$
 d < q \le \infty \quad \mbox{and} \quad s>1.
$$
Assume either $u$ is divergence-free or $\theta$ is bounded in $L^1\cap L^\infty$ for all time. Then (\ref{general}) has a unique global in time solution $\theta$ that satisfies
$$
\theta\in L^\infty([0,\infty); B^s_{q,\infty}(\mathbb{R}^d))\quad\mbox{and}\quad u\in L^\infty([0,\infty); B^{1+s'}_{q,\infty}(\mathbb{R}^d))
$$
for any $s'<s$.
\end{thm}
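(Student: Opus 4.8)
The plan is a \emph{local well-posedness $+$ continuation criterion $+$ global a priori bound} scheme; the new ingredient is the logarithmic extrapolation inequality of Proposition \ref{LogSob}, which in the end reduces global regularity to the divergence of an explicit integral that turns out to be precisely borderline at $\gamma=1$.

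\emph{Step 1 (local theory).} For $\theta_0\in X=L^1\cap L^\infty\cap B^s_{q,\infty}$ with $q>d$ and $s>1$, I would produce a unique local solution on a maximal interval $[0,T^{\ast})$ by a Friedrichs mollification (or iteration) scheme. The conditions $q>d$, $s>1$ keep us in a subcritical setting, and by Theorem \ref{nablau} the map $\theta\mapsto u$ costs only a logarithmic amount of regularity, so $\nabla u$ is essentially as smooth as $\theta$ in the Besov scale; uniform bounds on the approximants then come from the usual transport and product estimates in $B^s_{q,\infty}$ together with Theorem \ref{nablau}, and in the limit $\theta\in L^\infty_{\rm loc}([0,T^{\ast});B^s_{q,\infty})$ while, by the bound (\ref{bound2}), $u\in L^\infty_{\rm loc}([0,T^{\ast});B^{1+s'}_{q,\infty})$ for every $s'<s$ (the $\e$-loss for $u$ is because $P$, though of logarithmic order, is still an unbounded multiplier, so $P(\Lambda)$ maps $B^s_{q,\infty}$ only into $B^{s'}_{q,\infty}$, $s'<s$). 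Uniqueness follows from a Gronwall estimate on the difference of two solutions in a lower-order norm, both velocity fields being Lipschitz. Along the (divergence-free) flow, or by the alternative hypothesis, $\|\theta(\cdot,t)\|_{L^1\cap L^\infty}=\|\theta_0\|_{L^1\cap L^\infty}=:M$ for all $t$.

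\emph{Step 2 (continuation criterion).} I would then show that the solution extends past $T^{\ast}$ as soon as $\int_0^{T^{\ast}}\|\nabla u(\cdot,\tau)\|_{L^\infty}\,d\tau<\infty$. Applying $\Delta_j$ to the equation, using $\nabla\cdot u=0$ and the commutator estimates in $B^s_{q,\infty}$, and exploiting (via Theorem \ref{nablau}) that $u$ is tied to $\theta$ by an operator of logarithmic order — so that the would-be higher-order contributions are controlled by $\|\nabla u\|_{L^\infty}$ together with the conserved quantity $M$ — one obtains the transport estimate
\[
\|\theta(\cdot,t)\|_{B^s_{q,\infty}}\ \le\ \|\theta_0\|_{B^s_{q,\infty}}\,\exp\!\Bigl(C\int_0^t\|\nabla u(\cdot,\tau)\|_{L^\infty}\,d\tau\Bigr),
\]
with the low frequencies of $u$ (and hence $\|u(\cdot,t)\|_{B^{1+s'}_{q,\infty}}$, by Theorem \ref{nablau}) bounded in terms of $M$ and the right-hand side. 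Finiteness of $\int_0^{T^{\ast}}\|\nabla u\|_{L^\infty}$ thus keeps $\|\theta\|_{B^s_{q,\infty}}$ bounded on $[0,T^{\ast})$, and the local theory continues the solution past $T^{\ast}$, a contradiction. Arranging this transport estimate so that \emph{only} $\|\nabla u\|_{L^\infty}$ — and not a stronger norm of $u$ — governs the growth is the step I expect to be the main obstacle, since the endpoint index $r=\infty$ and the borderline range $s>1$ require a careful treatment of the commutator and paraproduct terms.

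\emph{Step 3 (global a priori bound).} Set $Y(t)=e+\|\theta(\cdot,t)\|_{B^s_{q,\infty}}$. Feeding Proposition \ref{LogSob} and the conservation $\|\theta(\cdot,t)\|_{L^1\cap L^\infty}\le M$ into the differentiated form of the estimate in Step 2, and absorbing lower-order terms for $Y$ large, gives a differential inequality of the shape
\[
\frac{dY}{dt}\ \le\ C\,M\,Y\,(\log Y)\,\bigl(\log\log Y\bigr)^{\gamma}.
\]
Separating variables yields
\[
\int_{Y(0)}^{Y(t)}\frac{dr}{r\,(\log r)\,(\log\log r)^{\gamma}}\ \le\ C\,M\,t,
\]
and the substitution $v=\log\log r$ turns the integrand into $v^{-\gamma}\,dv$, whose integral over $[\,\cdot\,,\infty)$ diverges exactly when $0\le\gamma\le1$ (it is $\tfrac{v^{1-\gamma}}{1-\gamma}$ for $\gamma<1$ and $\log v$ for $\gamma=1$, but converges for $\gamma>1$). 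Hence for $0\le\gamma\le1$ the quantity $Y(t)$ cannot reach $\infty$ in finite time: $\|\theta(\cdot,t)\|_{B^s_{q,\infty}}$ stays finite on every bounded interval, with an explicit iterated-exponential bound. Proposition \ref{LogSob} then gives $\int_0^T\|\nabla u(\cdot,t)\|_{L^\infty}\,dt<\infty$ for every $T$, so Step 2 forces $T^{\ast}=\infty$, and the claimed global bounds $\theta\in L^\infty([0,\infty);B^s_{q,\infty})$ and $u\in L^\infty([0,\infty);B^{1+s'}_{q,\infty})$, $s'<s$, follow.
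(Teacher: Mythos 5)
Your overall architecture (local solution, continuation criterion, a priori bound via Proposition \ref{LogSob} followed by an Osgood-type integration) is reasonable, and your Step 3 correctly isolates where $0\le\gamma\le1$ enters, namely the divergence of $\int^\infty \frac{dr}{r\,(\log r)\,(\log\log r)^\gamma}$; this is the same mechanism as the paper's final Gronwall step. The gap is in your Step 2, and it is exactly the obstacle you flag but do not resolve. The transport estimate
$$
\|\theta(t)\|_{B^s_{q,\infty}}\ \le\ \|\theta_0\|_{B^s_{q,\infty}}\,\exp\Bigl(C\int_0^t\|\nabla u(\cdot,\tau)\|_{L^\infty}\,d\tau\Bigr)
$$
is not available for $s>1$ with only $\|\nabla u\|_{L^\infty}$ governing the growth. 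In the Bony decomposition the offending term is $J_4=-\sum_{|j-k|\le2}\Delta_j(\Delta_k u\cdot\nabla S_{k-1}\theta)$: for $\sigma>1$ one only gets $\|\nabla S_{k-1}\theta\|_{L^q}\le C_\sigma\|\theta\|_{B^\sigma_{q,\infty}}$ with no gain of $2^{-\sigma k}$ (the sum $\sum_{m<k}2^{m(1-\sigma)}$ is dominated by its lowest frequency), so $2^{\sigma j}\|J_4\|_{L^q}\sim 2^{(\sigma-1)j}\,\|\nabla u\|_{L^\infty}\,\|\theta\|_{B^\sigma_{q,\infty}}$ is unbounded in $j$. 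The structural fact that $u$ is obtained from $\theta$ by an operator of logarithmic order does not repair this by itself: exploiting it turns $J_4$ into a term quadratic in $\|\theta\|_{B^s_{q,\infty}}$, which does not close under Gronwall.

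The paper circumvents this with a two-step bootstrap that your proposal is missing. It first proves the global bound in $B^\sigma_{q,\infty}$ for $d/q<\sigma<1$, where $J_4$ genuinely is bounded by $C\,2^{-\sigma(j+1)}\|\theta\|_{B^\sigma_{q,\infty}}\|\nabla u\|_{L^\infty}$ and Proposition \ref{LogSob} plus the log--loglog Gronwall argument apply. Then, to reach $\sigma_1$ with $1<\sigma_1<\sigma+1-d/q$, it re-estimates $J_4$ using Bernstein and Proposition \ref{important}, $\|\nabla\Delta_k u\|_{L^\infty}\le 2^{dk/q}(\log(2+k))^\gamma\|\Delta_k\theta\|_{L^q}$, so that $J_4$ becomes a source term bounded by $C(T,\|\theta_0\|_X)\,2^{-\sigma_1(j+1)}$ thanks to the already-established $B^\sigma_{q,\infty}$ bound; finitely many iterations then reach $s$. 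Note that the hypothesis $q>d$ is used precisely to make the increment $1-d/q$ positive; your argument never uses $q>d$ in an essential way, which is a symptom of the single-step estimate being unobtainable. To salvage your scheme you would need to state the continuation criterion at a regularity level below $1$ and then propagate upward as the paper does, or otherwise build this bootstrap into Step 2.
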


\vskip .1in
\begin{proof}
The proof consists of two main components. The first component derives a global {\it a priori} bound while the second constructs a unique local in time solution through the method of successive approximation.

\vskip .1in
We start with the part on the global {\it a priori} bound. This part is further divided into two steps. The first step shows that for any $d/q<\sigma <1$ and any $T>0$,
$$
\|\theta(t)\|_{B^\sigma_{q,\infty}} \le C(T, \|\theta_0\|_X), \qquad t\le T
$$
and the second step establishes the global bound in $B^{\sigma_1}_{q,\infty}$ for some $\sigma_1>1$. A finite number of iterations then yields the global bound in $B^s_{q,\infty}$.

\vskip .1in
When $u$ is divergence-free, $\theta_0\in L^1 \cap L^\infty$ implies that the corresponding solution $\theta$ of (\ref{loglog-euler}) satisfies the {\it a priori} bound
\begin{equation} \label{mmm}
\|\theta(\cdot,t)\|_{L^1\cap L^\infty} \le \|\theta_0\|_{L^1\cap L^\infty},
\quad t \ge 0.
\end{equation}
When $u$ is not divergence-free, we assume that (\ref{mmm}) holds. Of course, the bound does not have to be $\|\theta_0\|_{L^1\cap L^\infty}$.
In the rest of the proof, we can completely avoid using the divergence-free condition on $u$. This explains why the divergence-free condition is not used in the estimates.

\vskip .1in
Let $j\ge -1$ be an integer. Applying $\Delta_j$ to (\ref{loglog-euler}) and following a standard decomposition, we have
\begin{equation}\label{base1}
\partial_t \Delta_j \theta = J_1 + J_2 + J_3 +J_4 +J_5
\end{equation}
where
\begin{eqnarray}
J_{1} &=& - \sum_{|j-k|\le 2}
[\Delta_j, S_{k-1}(u)\cdot\nabla] \Delta_k \theta, \nonumber\\
J_{2} &=& - \sum_{|j-k|\le 2}
(S_{k-1}(u) - S_j(u)) \cdot \nabla \Delta_j\Delta_k \theta, \nonumber\\
J_3   &=&  - S_j(u) \cdot\nabla \Delta_j\theta, \nonumber\\
J_{4} &=& - \sum_{|j-k|\le 2}
\Delta_j (\Delta_k u \cdot \nabla S_{k-1}
(\theta)), \nonumber\\
J_{5} &=& -\sum_{k\ge j-1}\Delta_j (\Delta_k u\cdot\nabla
\widetilde{\Delta}_k \theta) \nonumber
\end{eqnarray}
with $\widetilde{\Delta}_k = \Delta_{k-1} + \Delta_k + \Delta_{k+1}$.
Multiplying (\ref{base1}) by $\Delta_j\theta |\Delta_j \theta|^{q-2}$, integrating in space, integrating by part in the term associated with $J_3$, and applying H\"{o}lder's inequality, we have
\begin{equation}\label{root1}
\frac{d}{dt} \|\Delta_j \theta\|_{L^q} \le \|J_1\|_{L^q} + \|J_2\|_{L^q} + \|\widetilde{J_3}\|_{L^q} + \|J_4\|_{L^q} + \|J_5\|_{L^q}.
\end{equation}
By a standard commutator estimate,
$$
\|J_1\|_{L^q} \le C  \sum_{|j-k|\le 2} \|\nabla S_{k-1} u\|_{L^\infty} \|\Delta_k \theta\|_{L^q}.
$$
By H\"{o}lder's and Bernstein's inequalities,
$$
\|J_2\|_{L^q} \le C\, \|\nabla \widetilde{\Delta}_j u\|_{L^\infty} \, \|\Delta_j \theta\|_{L^q}.
$$
After integration by parts, the term $J_3$ leads to a term $\widetilde{J_3} =
\frac{1}{q}\left(\nabla\cdot S_j u\right)\Delta_j\theta$, and so
$$
\|{\widetilde{J_3}}\|_{L^q} \le C\,\|\nabla\cdot S_j u\|_{L^\infty} \, \|\Delta_j \theta\|_{L^q}.
$$
For $J_4$ and $J_5$, we have
\begin{eqnarray*}
\|J_4\|_{L^q} &\le&  \sum_{|j-k|\le 2} \|\Delta_k u\|_{L^\infty} \, \|\nabla S_{k-1} \theta\|_{L^q} \\
&\le& C\, \sum_{|j-k|\le 2} \|\nabla \Delta_k u\|_{L^\infty} \, \sum_{m\le k-1} 2^{m-k} \|\Delta_m\theta\|_{L^q},\\
\|J_5\|_{L^q} &\le& C \,\sum_{k\ge j-1} \,\|\Delta_k u\|_{L^\infty} \| \widetilde{\Delta}_k \nabla \theta\|_{L^q} \\
&\le& C\, \sum_{k\ge j-1} \|\nabla \Delta_k u\|_{L^\infty}\, \|\widetilde{\Delta}_k \theta\|_{L^q}.
\end{eqnarray*}
By Proposition \ref{LogSob}, for any $\sigma \in \mathbb{R}$,
\begin{eqnarray}
\|J_1\|_{L^q} &\le&  C\, \sum_{|j-k|\le 2} \|\nabla u\|_{L^\infty}  2^{- \sigma (k+1)} \, 2^{\sigma(k+1) } \|\Delta_k \theta\|_{L^q} \label{j1q} \\
 &\le&  C\, 2^{-\sigma (j+1)}\,\|\theta\|_{B^\sigma_{q,\infty}}\, \|\nabla u\|_{L^\infty}\, \, \sum_{|j-k|\le 2} 2^{\sigma (j-k)}\\
 &\le&  C\, 2^{-\sigma (j+1)}\,\|\theta\|_{B^\sigma_{q,\infty}}\, \|\nabla u\|_{L^\infty},
\end{eqnarray}
where $C$ is a constant depending on $\sigma$ only. It is clear that $\|J_2\|_{L^q}$  and $\|\widetilde{J_3}\|_{L^q}$ obey the same bound.  For any $\sigma<1$, we have
\begin{eqnarray*}
\|J_4\|_{L^q} &\le& C\, \|\nabla u\|_{L^\infty}\,  \sum_{|j-k|\le 2} \, \sum_{m< k-1} 2^{m-k}\, 2^{-\sigma(m+1)} \,2^{\sigma(m+1)} \,\|\Delta_m\theta\|_{L^q} \\
&\le& C\, \|\nabla u\|_{L^\infty}\,  \|\theta\|_{B^\sigma_{q,\infty}}\, \sum_{|j-k|\le 2} \, \sum_{m < k-1} 2^{m-k}\, 2^{-\sigma(m+1)}\\
&=& C\, 2^{-\sigma (j+1)}\,\|\theta\|_{B^\sigma_{q,\infty}}\, \|\nabla u\|_{L^\infty}\,\sum_{|j-k|\le 2} 2^{\sigma(j-k)} \sum_{m < k-1} 2^{(m-k)(1-\sigma)}\\
&\le& C\, 2^{-\sigma (j+1)}\,\|\theta\|_{B^\sigma_{q,\infty}}\, \|\nabla u\|_{L^\infty}.
\end{eqnarray*}
where $C$ is a constant depending on $\sigma$ only and the condition $\sigma<1$ is used to guarantee that $(m-k)(1-\sigma)<0$. For any $\sigma>0$,
\begin{eqnarray*}
\|J_5\|_{L^q} &\le& C \, \|\nabla u\|_{L^\infty}\, 2^{-\sigma(j+1)}\, \sum_{k\ge j-1}   2^{\sigma(j-k)}\, 2^{\sigma(k+1)}\, \|\widetilde{\Delta}_k \theta\|_{L^q} \\
&\le& C\, 2^{-\sigma (j+1)}\,\|\theta\|_{B^\sigma_{q,\infty}}\, \|\nabla u\|_{L^\infty}.
\end{eqnarray*}
Collecting these estimates, we obtain, for any $0<\sigma<1$,
\begin{eqnarray*}
\frac{d}{dt} \|\Delta_j \theta\|_{L^q} \le C\, 2^{-\sigma (j+1)}\,\|\theta\|_{B^\sigma_{q,\infty}}\, \|\nabla u\|_{L^\infty}.
\end{eqnarray*}
Integrating in time yields
$$
\|\theta(t)\|_{B^\sigma_{q,\infty}} \le \|\theta_0\|_{B^\sigma_{q,\infty}} + C\, \int_0^t \|\theta(\tau)\|_{B^\sigma_{q,\infty}}\, \|\nabla u(\tau)\|_{L^\infty}\,d\tau.
$$
Invoking the extrapolation inequality in Proposition \ref{LogSob}, we obtain, for $d/q<\sigma<1$,
\begin{eqnarray}
\|\theta(t)\|_{B^\sigma_{q,\infty}} &\le& \|\theta_0\|_{B^\sigma_{q,\infty}} + C\,\int_0^t \|\theta(\tau)\|_{B^\sigma_{q,\infty}}\,
 \Big[\|\theta\|_{L^1\cap L^\infty} + (1+\|\theta\|_{L^\infty})\,  \nonumber\\
&& \qquad\times \log(1+ \|\theta\|_{B^\sigma_{q,\infty}})\,\left(\log\left(1+\log(1+\|\theta\|_{B^\sigma_{q,\infty}})\right)\right)^\gamma  \Big]\,d\tau. \nonumber
\end{eqnarray}
It then follows from Gronwall's inequality that, for any $T>0$,
$$
\|\theta(t)\|_{B^\sigma_{q,\infty}} \le C(T, \|\theta_0\|_X), \qquad t\le T.
$$

\vskip .1in
We now continue with the second step. Since $d<q\le \infty$, we can choose $\sigma$ satisfying
$$
\frac{d}{q}<\sigma<1, \quad \sigma +1 -\frac{d}{q} > 1
$$
and then set $\sigma_1$ satisfying
$$
1< \sigma_1 < \sigma + 1 -\frac{d}{q}.
$$
This step establishes the global bound for $\|\theta\|_{B^{\sigma_1}_{q,\infty}}$.  $J_1$, $J_2$ and $J_3$ and $J_5$ can be bounded the same way as before, namely
$$
\|J_1\|_{L^q}, \,\|J_2\|_{L^q}, \,\|\widetilde{J_3}\|_{L^q}, \,\|J_5\|_{L^q} \le C\,  2^{-\sigma_1 (j+1)}\,\|\theta\|_{B^{\sigma_1}_{q,\infty}}\, \|\nabla u\|_{L^\infty}.
$$
$\|J_4\|_{L^q}$ is estimated differently and bounded by the global bound in the first step. We start with the bound
$$
\|J_4\|_{L^q} \le C\, \sum_{|j-k|\le 2} \|\nabla \Delta_k u\|_{L^\infty} \, \sum_{m< k-1} 2^{m-k} \|\Delta_m\theta\|_{L^q}.
$$
By Bernstein's inequality and Proposition \ref{important}, we have
\begin{eqnarray*}
\|\nabla \Delta_k u\|_{L^\infty} &\le& 2^{\frac{d k}{q}}\, \|\nabla \Delta_k u\|_{L^q} \\
&\le& 2^{\frac{d k}{q}}\, \left(\log(2 + k)\right)^\gamma  \|\Delta_k \theta\|_{L^q}.
\end{eqnarray*}
Clearly,
\begin{eqnarray*}
\sum_{m < k-1} 2^{m-k}\, \|\Delta_m \theta\|_{L^q} &=& 2^{-\sigma k} \sum_{m< k-1} 2^{(m-k)(1-\sigma)} 2^{\sigma m}\,\|\Delta_m \theta\|_{L^q} \\
&\le& C\, 2^{-\sigma k} \|\theta\|_{B^\sigma_{q,\infty}}.
\end{eqnarray*}
Therefore,
\begin{eqnarray*}
\|J_4\|_{L^q} &\le&   C\, \sum_{|j-k|\le 2} 2^{\frac{d k}{q}}\, \left(\log(2 + k)\right)^\gamma  \|\Delta_k \theta\|_{L^q} 2^{-\sigma k} \|\theta\|_{B^\sigma_{q,\infty}}\\
&=& C\, 2^{-\sigma_1(j+1)} \,\|\theta\|_{B^\sigma_{q,\infty}}\,\sum_{|j-k|\le 2} 2^{\sigma_1(j-k)} \left(\log(2 + k)\right)^\gamma \, 2^{(\sigma_1+\frac{d}{q}-\sigma) k}\|\Delta_k \theta\|_{L^q} \\
&=& C\, 2^{-\sigma_1(j+1)} \,\|\theta\|_{B^\sigma_{q,\infty}}\,
\|\theta\|_{B^{\sigma_2}_{q,\infty}} \, \sum_{|j-k|\le 2} 2^{\sigma_1(j-k)} \left(\log(2 + k)\right)^\gamma \, 2^{(\sigma_1+\frac{d}{q}-\sigma-\sigma_2) k}
\end{eqnarray*}
where $\sigma_2<1$ is chosen very close to $1$ and satisfies
$$
\sigma_1+\frac{2}{q}-\sigma-\sigma_2<0.
$$
Then, by the global bound in the first step,
$$
\|J_4\|_{L^q}  \le C\, 2^{-\sigma_1(j+1)} \,\|\theta\|_{B^\sigma_{q,\infty}}\,
\|\theta\|_{B^{\sigma_2}_{q,\infty}} \le C(T, \|\theta_0\|_X) \,  2^{-\sigma_1(j+1)}.
$$
Collecting the estimates in this step, we have
$$
\frac{d}{dt} \|\Delta_j \theta\|_{L^q} \le C\, 2^{-\sigma_1 (j+1)}\,\|\theta\|_{B^{\sigma_1}_{q,\infty}}\, \|\nabla u\|_{L^\infty} + C(T, \|\theta_0\|_X) \,  2^{-\sigma_1(j+1)}.
$$
By Proposition \ref{LogSob}, for any $d/q< \sigma<1$,
\begin{eqnarray*}
\|\nabla u\|_{L^\infty} &\le& \|\theta\|_{L^1\cap L^\infty} + (1+\|\theta\|_{L^\infty})\\
&& \quad \times \log(1+ \|\theta\|_{B^\sigma_{q,\infty}})\,
\left(\log\left(1+\log(1+\|\theta\|_{B^\sigma_{q,\infty}})\right)\right)^\gamma\\
&\le& C(T, \|\theta_0\|_X).
\end{eqnarray*}
Therefore,
$$
\|\theta(t)\|_{B^{\sigma_1}_{q,\infty}}\, \le \|\theta_0\|_{B^{\sigma_1}_{q,\infty}}\, + C(T, \|\theta_0\|_X)\left(1+ \int_0^t \|\theta(\tau)\|_{B^{\sigma_1}_{q,\infty}}\,d\tau\right).
$$
Gronwall's inequality then yields the global bound $\|\theta(t)\|_{B^{\sigma_1}_{q,\infty}} \le C(T, \|\theta_0\|_X)$. If $s>\sigma_1$, we can repeat this step to achieve the desired regularity.

\vskip .1in
We now describe the process of constructing a local solution of (\ref{general}).
The solution is constructed through the method of successive approximation.
Consider a successive approximation sequence $\{\theta^{(n)}\}$ satisfying
\begin{equation}\label{succ}
\left\{
\begin{array}{l}
\theta^{(1)} = S_2 \theta_0, \\ \\
u^{(n)} = (u^{(n)}_j), \quad u^{(n)}_j = \mathcal{R}_l \Lambda^{-1} P(\Lambda) \theta^{(n)},\\ \\
\partial_t \theta^{(n+1)} + u^{(n)} \cdot\nabla \theta^{(n+1)} = 0,\\ \\
\theta^{(n+1)}(x,0) = S_{n+2} \theta_0,
\end{array}
\right.
\end{equation}
where $P(\Lambda) = \left(\log(1+\log(1 -\Delta))\right)^\gamma$. In order to show that $\{\theta^{(n)}\}$ converges to a solution of (\ref{general}) , it suffices to prove the following properties of $\{\theta^{(n)}\}$:
\begin{enumerate}
\item There exists $T_1>0$ such that $\theta^{(n)}$ is bounded uniformly in $B^{s}_{q,\infty}$ for any $t\in[0,T]$, namely
$$
\|\theta^{(n)}(\cdot,t)\|_{B^{s}_{q,\infty}} \le C_1 \|\theta_0\|_{X}, \quad t\in [0,T_1],
$$
where $C_1$ is a constant independent of $n$.
\item There exists $T_2>0$ such that $\eta^{(n+1)} = \theta^{(n+1)}- \theta^{(n)}$ is a Cauchy sequence in $B^{s-1}_{q,\infty}$,
$$
\|\eta^{(n)}(\cdot,t)\|_{B^{s-1}_{q,\infty}} \le C_2\, 2^{-n}, \quad t\in [0, T_2],
$$
where $C_2$ is independent of $n$ and depends on $T_2$ and $\|\theta_0\|_X$ only.
\end{enumerate}
These two properties are established by following the ideas of the previous part and we omit the details. Let $T= \min\{T_1, T_2\}$. We conclude from these two properties that there exists $\theta$ satisfying
\begin{eqnarray*}
&& \theta(\cdot, t) \in B^{s}_{q,\infty} \quad \mbox{for}\quad  0\le t\le T, \\
&&  \theta^{(n)}(\cdot, t) \rightharpoonup \theta(\cdot, t) \quad\mbox{in }\quad B^{s}_{q,\infty},\\
&& \theta^{(n)}(\cdot, t) \rightarrow \theta(\cdot, t) \quad\mbox{in }\quad B^{s-1}_{q,\infty}.
\end{eqnarray*}
Due to the interpolation inequality, for any $s-1\le \widetilde{s}\le s$,
$$
\|f\|_{B^{\widetilde{s}}_{q,\infty}} \le C \, \|f\|^{s-\widetilde{s}}_{B^{s-1}_{q,\infty}}\, \|f\|^{\widetilde{s}+1-s}_{B^{s}_{q,\infty}},
$$
we deduce that
\begin{equation}\label{thnto}
\theta^{(n)}(\cdot, t) \rightarrow \theta(\cdot, t) \quad\mbox{in }\quad B^{\widetilde{s}}_{q,\infty}.
\end{equation}
In addition, by the relation $u^{(n)}_k = \mathcal{R}_l \Lambda^{-1} P(\Lambda)\, \theta^{(n)}$ and Proposition \ref{important}, we can easily check that
$$
\nabla u^{(n)}, \,\,\nabla u(\cdot, t)  \in B^{s_1}_{q,\infty} \quad\mbox{for any}\quad s_1 < s.
$$
In order to pass the limit in the nonlinear term, we write
$$
u^{(n)}\cdot\nabla \theta^{(n+1)} - u\cdot\nabla \theta = u^{(n)}\cdot \nabla (\theta^{(n+1)}-\theta) + (u^{(n)}-u) \cdot \nabla\theta.
$$
We can show that, for any $\sigma <s-1$,
\begin{equation} \label{conv}
u^{(n)}\cdot \nabla (\theta^{(n+1)}-\theta) \to 0, \quad (u^{(n)}-u) \cdot \nabla\theta \to 0 \quad \mbox{in} \quad B^{\sigma}_{q,\infty},
\end{equation}
as $n\to \infty$.
Again they can be proven by following the ideas in the first part of this proof. Finally the uniqueness can be established by estimating the difference of any two solutions in $B^{s-1}_{q,\infty}$. A similar argument as in the proof of $\|\eta^{(n)}(\cdot,t)\|_{B^{s-1}_{q,\infty}} \le C_2\, 2^{-n}$ yields the conclusion that the difference must be zero. This completes the proof of Theorem \ref{AP}.
\end{proof}

\vskip .3in
\section{Generalized Inviscid SQG equation}
\label{GISQG}

This section is devoted to the generalized inviscid SQG equation
\begin{equation} \label{SQG_beta}
\left\{
\begin{array}{l}
\pp_t \theta + (u\cdot\nabla) \theta =0, \quad x\in \mathbb{R}^2, \,\,t>0, \\
u = \nabla^\perp \psi,\quad -\Lambda^{2-\beta} \psi = \theta,\quad x\in \mathbb{R}^2, \,\,t>0,
\end{array}
\right.
\end{equation}
where $0\le \beta\le 1$ is a parameter. (\ref{SQG_beta}) with $\beta=0$ becomes the 2D Euler vorticity equation while (\ref{SQG_beta}) with $\beta=1$ is the SQG equation. Except in the case when $\beta=0$, the global regularity issue for (\ref{SQG_beta}) remains open. This section presents a regularity criterion in terms of the  norm of $\theta$ in the H\"{o}lder space $C^\beta(\mathbb{R}^2)$, which directly relates the regularity of $\theta$ to the parameter $\beta$. The precise conclusion has been stated in Theorem \ref{crit10} and we reproduce it here.

\vskip .1in
\begin{thm}\label{crit1}
Consider (\ref{SQG_beta}) with $0\le \beta\le 1$. Let $\theta$ be a solution of (\ref{SQG_beta}) corresponding to the data $\theta_0 \in C^\sigma(\mathbb{R}^2)\cap L^q(\mathbb{R}^2)$ with $\sigma>1$ and $q>1$. Let $T>0$. If $\theta$ satisfies
\begin{equation}\label{cdelta}
\int_0^{T}\|\theta(\cdot, t)\|_{C^\beta(\mathbb{R}^2)} \,dt < \infty,
\end{equation}
then $\theta$ remains in $C^\sigma(\mathbb{R}^2)\cap L^q(\mathbb{R}^2)$ on the time interval $[0,T]$.
\end{thm}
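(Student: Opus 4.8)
The plan is to reduce the statement to an \emph{a priori} estimate and close it by a standard continuation argument: by local well-posedness one has a solution on a maximal interval $[0,T^*)$ along which $\limsup_{t\to T^*}\|\theta(\cdot,t)\|_{C^\sigma}=\infty$ whenever $T^*<\infty$, so it suffices to bound $\sup_{t<\min(T^*,T)}\|\theta(\cdot,t)\|_{C^\sigma}$ in terms of $T$, $\|\theta_0\|_{C^\sigma\cap L^q}$ and $\int_0^T\|\theta(\cdot,t)\|_{C^\beta}\,dt$; this forces $T^*>T$. Since $u=\nabla^\perp\psi$ is divergence free the flow is measure preserving, so every $L^p$ norm of $\theta$ is conserved; in particular $\|\theta(\cdot,t)\|_{L^q}=\|\theta_0\|_{L^q}$ and $\|\theta(\cdot,t)\|_{L^\infty}=\|\theta_0\|_{L^\infty}$, which settles the $L^q$ part of the conclusion and supplies the two conserved quantities that sit in the estimates below. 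We take $\sigma$ not an integer, so that $C^\sigma=B^\sigma_{\infty,\infty}$ (the integer case needs only routine modifications).

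Two inputs drive the argument. First, the logarithmic H\"older inequality quoted above for the symmetric part $S$ of $\nabla u$; combined with the block bounds of Section~\ref{nablaubd} it also gives $\|\nabla u(\cdot,t)\|_{L^\infty}\le C\|\theta(\cdot,t)\|_{C^\beta}\ln(e+\|\theta(\cdot,t)\|_{C^\sigma})+C\|\theta_0\|_{L^q}$, because the entries of $\nabla u$ are, up to Riesz transforms, $\Lambda^\beta\theta$, and the rotation part $\tfrac12(\Lambda^\beta\theta)\,J$ (with $J$ a fixed rotation) obeys the same Littlewood--Paley bound as $S$ — one splits $\Lambda^\beta\theta$ into low, intermediate and high blocks, uses $\|\Delta_j\Lambda^\beta\theta\|_{L^\infty}\lesssim 2^{j\beta}\|\Delta_j\theta\|_{L^\infty}$ on the intermediate ones, and optimizes the cutoff, exactly as in the proof of Proposition~\ref{LogSob}. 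Second, the deformation identity: differentiating the equation gives $(\partial_t+u\cdot\nabla)|\nabla\theta|^2=-2\sum_{i,k}\partial_iu_k\,\partial_i\theta\,\partial_k\theta=-2\,(\nabla\theta)^{T}S\,\nabla\theta$, since the antisymmetric part of $\nabla u$ drops out of this quadratic form, so by the maximum principle along characteristics $\tfrac{d}{dt}\|\nabla\theta(\cdot,t)\|_{L^\infty}\le\|S(\cdot,t)\|_{L^\infty}\|\nabla\theta(\cdot,t)\|_{L^\infty}$ — the Eulerian face of the back-to-labels bound $\|\nabla A(\cdot,t)\|_{L^\infty}=\|\nabla X(\cdot,t)\|_{L^\infty}\le\exp\bigl(\int_0^t\|S(\cdot,s)\|_{L^\infty}\,ds\bigr)$ for the flow map $X$ and $A=X^{-1}$ (the equality using $\det\nabla X\equiv1$ in two dimensions). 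With these I first show $\int_0^T\|\nabla\theta(\cdot,t)\|_{L^\infty}\,dt<\infty$: if $\beta=1$ it is immediate, as $C^\beta=C^1$ gives $\|\nabla\theta(\cdot,t)\|_{L^\infty}\le\|\theta(\cdot,t)\|_{C^\beta}$; if $0\le\beta<1$, use the logarithmic H\"older inequality at the admissible exponent $1>\beta$, $\|S(\cdot,t)\|_{L^\infty}\le C\|\theta(\cdot,t)\|_{C^\beta}\ln(e+\|\theta(\cdot,t)\|_{C^1})+C\|\theta_0\|_{L^q}$, so that $w(t):=\ln(e+\|\nabla\theta(\cdot,t)\|_{L^\infty})$ obeys $w'\le C\|\theta(\cdot,t)\|_{C^\beta}\,w+C(\|\theta_0\|_{L^q}+\|\theta(\cdot,t)\|_{C^\beta})$ and Gr\"onwall bounds $\sup_{t\le T}\|\nabla\theta(\cdot,t)\|_{L^\infty}$, with no smallness assumption.

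The heart of the matter is a \emph{tame} transport estimate in $C^\sigma=B^\sigma_{\infty,\infty}$. Localizing the equation, using that $u$ and every $S_{j-1}u$ are divergence free so that the transported part contributes nothing to $\tfrac{d}{dt}\|\Delta_j\theta\|_{L^\infty}$, and estimating the residual commutator by the Bony decomposition, one gets
\[
\frac{d}{dt}\|\theta(\cdot,t)\|_{C^\sigma}\ \le\ C\bigl(\|\nabla u(\cdot,t)\|_{L^\infty}\,\|\theta(\cdot,t)\|_{C^\sigma}+\|\nabla\theta(\cdot,t)\|_{L^\infty}\,\|u(\cdot,t)\|_{C^\sigma}\bigr),
\]
the key point being that $\|\theta\|_{C^\sigma}$ enters only linearly, and the sole high norm of $u$ that appears satisfies $\|u\|_{C^\sigma}\lesssim\|\Lambda^{\beta-1}\theta\|_{C^\sigma}\lesssim\|\theta\|_{C^{\sigma-1+\beta}}\le\|\theta\|_{C^\sigma}$ precisely because $\beta\le1$. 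Inserting $\|\nabla u(\cdot,t)\|_{L^\infty}\le C\|\theta(\cdot,t)\|_{C^\beta}\ln(e+\|\theta(\cdot,t)\|_{C^\sigma})+C\|\theta_0\|_{L^q}$ and setting $y(t)=\ln(e+\|\theta(\cdot,t)\|_{C^\sigma})$ yields $y'(t)\le C\|\theta(\cdot,t)\|_{C^\beta}\,y(t)+C\bigl(\|\theta_0\|_{L^q}+\|\nabla\theta(\cdot,t)\|_{L^\infty}\bigr)$, so by Gr\"onwall
\[
y(t)\ \le\ \Bigl(y(0)+C\!\int_0^t\!\bigl(\|\theta_0\|_{L^q}+\|\nabla\theta(\cdot,s)\|_{L^\infty}\bigr)\,ds\Bigr)\exp\Bigl(C\!\int_0^t\!\|\theta(\cdot,s)\|_{C^\beta}\,ds\Bigr),
\]
which is finite for $t<\min(T^*,T)$ by the hypothesis $\int_0^T\|\theta\|_{C^\beta}\,dt<\infty$ together with $\int_0^T\|\nabla\theta\|_{L^\infty}\,dt<\infty$ from the previous paragraph. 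This bounds $\sup_{t<\min(T^*,T)}\|\theta(\cdot,t)\|_{C^\sigma}$, so $T^*>T$ and $\theta\in C([0,T];C^\sigma(\mathbb{R}^2)\cap L^q(\mathbb{R}^2))$.

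I expect the tame transport estimate to be the main obstacle: one must verify that the top-order contribution of $u\cdot\nabla\theta$ to the $C^\sigma$ norm is only $\|\nabla u\|_{L^\infty}\|\theta\|_{C^\sigma}+\|\nabla\theta\|_{L^\infty}\|u\|_{C^\sigma}$ and not, e.g., $\|\nabla u\|_{C^{\sigma-1}}\|\theta\|_{C^\sigma}$, which would produce finite-time blow-up in the Gr\"onwall step. This rests on careful commutator and paraproduct bookkeeping and, decisively, on the restriction $0\le\beta\le1$, which makes $u=-\nabla^\perp\Lambda^{-(2-\beta)}\theta$ no rougher than $\theta$ (hence $\|u\|_{C^\sigma}\lesssim\|\theta\|_{C^\sigma}$) and also allows the logarithmic H\"older inequality to be applied at an exponent strictly between $\beta$ and $1$ in the preliminary control of $\|\nabla\theta\|_{L^\infty}$ when $\beta<1$; both mechanisms fail for $\beta>1$, in line with the fact that a criterion of this shape is expected only in the stated range.
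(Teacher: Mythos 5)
Your proposal is correct and follows essentially the same route as the paper: the crux in both is the logarithmic H\"older inequality of Proposition \ref{sinfty} for the strain $S$, fed into a Gr\"onwall argument on $\ln(1+\|\nabla\theta\|_{L^\infty})$, which closes under the hypothesis $\int_0^T\|\theta\|_{C^\beta}\,dt<\infty$. The only differences are presentational: you obtain the bound $\frac{d}{dt}\|\nabla\theta\|_{L^\infty}\le \|S\|_{L^\infty}\|\nabla\theta\|_{L^\infty}$ in Eulerian form (the quadratic form $(\nabla\theta)^T\nabla u\,\nabla\theta$ seeing only the symmetric part) rather than through the back-to-labels map of Proposition \ref{nainfty}, and you flesh out, via a tame Besov transport estimate, the final propagation of $C^\sigma$ regularity for $\sigma>1$ that the paper declares to follow easily from the $\|\nabla\theta\|_{L^\infty}$ bound.
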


\vskip .1in
Some special consequences of this theorem are given in the following remark.
\begin{rem}
In the special case when $\beta=0$, Theorem \ref{crit1} re-establishes the global regularity for the 2D Euler equation. In the special case when $\beta=1$, (\ref{SQG_beta}) becomes the inviscid SQG equation and Theorem \ref{crit1} reduces to a regularity criterion of \cite{CMT} for the SQG equation.
\end{rem}

\vskip .1in
To prove Theorem \ref{crit1}, we first establish two propositions. The first one bounds the back-to-labels map (the inverse map of the particle trajectory) in terms of the symmetric part of $\nabla u$. The second proposition is a logarithmic H\"{o}lder space
inequality.

\vskip .1in
Let $X(a,t)$ be the particle trajectory determined by the velocity $u$, namely
\begin{equation}\label{part}
\left\{
\begin{array}{l}
\frac{d X(a,t)}{dt} =u(X(a,t),t),\\
X(a,0)=a.
\end{array}
\right.
\end{equation}
Let $A(x,t)$ be the back-to-labels map or the inverse map of $X$. Then
\begin{equation}\label{axid}
A(X(a,t),t) =a \quad\mbox{for any $a\in \mathbb{R}^2$}.
\end{equation}
Let $S$ denote the symmetric part of $\nabla u$, namely
\begin{equation}\label{Sdef}
S= \frac12 \left(\nabla u + (\nabla u)^T\right),
\end{equation}
where $(\nabla u)^T$ denotes the transpose of $\nabla u$. The following proposition bounds $\nabla_x A$ in terms of $S$.

\begin{prop} \label{nainfty}
Let $u$ be a velocity field and let $S$ be the strain tensor as defined in (\ref{Sdef}).  Let $A$ be the back-to-labels map. Then,
$$
\|\nabla_x A(\cdot,t)\|_{L^\infty}  \le \exp \left(\int_0^t \|S(\cdot,\tau)\|_{L^\infty} \,d\tau\right).
$$
\end{prop}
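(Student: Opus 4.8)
The plan is to differentiate the identity $A(X(a,t),t)=a$ along particle trajectories and reduce the bound on $\nabla_x A$ to a Gr\"onwall estimate governed by the symmetric part $S$ of $\nabla u$. First I would fix a label $a$, write $b=X(a,t)$, and compute the spatial gradient of the relation $A(X(a,t),t)=a$. By the chain rule, $\nabla_a A(X(a,t),t)\cdot \nabla_a X(a,t)=I$, so it is cleaner to work directly with the matrix $M(a,t):=\nabla_a X(a,t)$, which satisfies the linearized flow equation $\frac{d}{dt} M = (\nabla u)(X(a,t),t)\, M$ with $M(a,0)=I$, and then recover $\nabla_x A = M^{-1}$ evaluated at the appropriate point.

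The key step is to estimate $\|M^{-1}\|$. I would let $N(a,t)=M(a,t)^{-1}=\nabla_x A(X(a,t),t)$ and differentiate $NM=I$ in time to get $\frac{d}{dt} N = -N\,(\nabla u)(X(a,t),t)$, again with $N(a,0)=I$. To bring in $S$ rather than the full $\nabla u$, I would study the evolution of $|N(a,t)v|^2$ for an arbitrary fixed vector $v\in\mathbb{R}^2$ (or of $\|N\|$ via a symmetric quantity): $\frac{d}{dt}|Nv|^2 = -2\, (Nv)^T (\nabla u)^T (Nv) = -2\,(Nv)^T S\,(Nv)$, where the antisymmetric part of $\nabla u$ drops out because $w^T(\text{antisymmetric})w=0$. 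Hence $\left|\frac{d}{dt}|Nv|^2\right| \le 2\,\|S(\cdot,t)\|_{L^\infty}\,|Nv|^2$, and Gr\"onwall gives $|N(a,t)v| \le |v|\exp\!\left(\int_0^t \|S(\cdot,\tau)\|_{L^\infty}\,d\tau\right)$ for all $v$, which is exactly $\|N(a,t)\|\le \exp\!\left(\int_0^t \|S(\cdot,\tau)\|_{L^\infty}\,d\tau\right)$. Taking the supremum over $a$ and noting that as $a$ ranges over $\mathbb{R}^2$ the point $X(a,t)$ ranges over all of $\mathbb{R}^2$ (the flow map is a bijection), we conclude $\|\nabla_x A(\cdot,t)\|_{L^\infty}\le \exp\!\left(\int_0^t \|S(\cdot,\tau)\|_{L^\infty}\,d\tau\right)$.

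The main obstacle is not the algebra but the justification: one must know that the flow map $X(\cdot,t)$ is well-defined, $C^1$ in the label variable, and a bijection of $\mathbb{R}^2$, so that $M$ is invertible and $A$ is differentiable; this requires $\nabla u\in L^\infty$ on the time interval under consideration, which in the intended application is guaranteed a priori (or is part of the bootstrap) via the logarithmic H\"older inequality for $S$ mentioned just before the proposition. I would state this regularity of the flow as a standing hypothesis (it holds for the smooth approximate solutions used to build the solution, with the estimate then passing to the limit), so that the differentiations above are legitimate. A secondary minor point is the direction of the inequality sign when $\frac{d}{dt}|Nv|^2$ is negative: the bound $\frac{d}{dt}|Nv|^2 \le 2\|S\|_{L^\infty}|Nv|^2$ is all that is needed, and integrating this differential inequality yields the exponential upper bound regardless of sign, since $|N(a,0)v|^2=|v|^2$. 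Everything else is a routine application of Gr\"onwall's lemma.
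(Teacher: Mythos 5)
Your overall strategy is the same as the paper's: the paper derives the transport equation $\partial_t A+u\cdot\nabla A=0$, differentiates it in $x$, and observes that the resulting quadratic form in $\nabla_x A$ only sees the symmetric part $S$; your Lagrangian formulation with $N(a,t)=\nabla_x A(X(a,t),t)=M(a,t)^{-1}$ is the same computation written along characteristics. However, there is a concrete error in your key identity. From $\frac{d}{dt}N=-N\,\nabla u$ one gets
$$
\frac{d}{dt}|Nv|^2=-2\,(Nv)^T N(\nabla u)\,v=-2\,(Nv)^T\bigl(N(\nabla u)N^{-1}\bigr)(Nv),
$$
not $-2\,(Nv)^T(\nabla u)^T(Nv)$. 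The matrix $N(\nabla u)N^{-1}$ is a conjugate of $\nabla u$, and conjugation does not commute with taking the symmetric part, so the antisymmetric part of $\nabla u$ does \emph{not} drop out of this particular quadratic form; its symmetric part is not controlled by $\|S\|_{L^\infty}$ alone. As written, the differential inequality $\bigl|\frac{d}{dt}|Nv|^2\bigr|\le 2\|S\|_{L^\infty}|Nv|^2$ does not follow.

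The repair is immediate and brings you back exactly to the paper's argument: apply the quadratic-form trick to the rows of $N$ rather than to its columns. Writing $r_i$ for the $i$-th row of $N$ viewed as a column vector, the equation $\dot N=-N\nabla u$ reads $\dot r_i=-(\nabla u)^T r_i$, whence $\frac{d}{dt}|r_i|^2=-2\,r_i^T(\nabla u)^T r_i=-2\,r_i^T S\,r_i$, and Gr\"onwall gives $|r_i(t)|\le e^{\int_0^t\|S\|_{L^\infty}d\tau}$ since $r_i(0)=e_i$. Equivalently one may use the Frobenius norm: $\frac{d}{dt}\mathrm{tr}(NN^T)=-2\,\mathrm{tr}(NSN^T)\le 2\|S\|_{L^\infty}\mathrm{tr}(NN^T)$, which is precisely the paper's computation (there phrased as $\frac12\frac{D}{Dt}|\nabla_x A|^2=-(S(\nabla_x A))\cdot(\nabla_x A)$ after relabeling the summation indices). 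Your remarks about the regularity of the flow map and about the direction of the differential inequality are fine and consistent with the paper's (implicit) standing assumptions.
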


\vskip .1in
The second proposition bounds the $L^\infty$-norm of $S$ in terms of the logarithm of the H\"{o}lder-norm of $\theta$.
\begin{prop} \label{sinfty}
Let $0\le \beta\le 1$. Assume that $u$ and $\theta$ are related by
\begin{equation} \label{uth}
u = -\nabla^\perp \Lambda^{-2+\beta} \theta
\end{equation}
If $\theta\in C^\sigma(\mathbb{R}^2) \cap L^q(\mathbb{R}^2)$ with $\sigma>\beta$ and $q>1$,
\begin{equation} \label{logs}
\|S\|_{L^\infty} \le C_1 \|\theta\|_{C^\beta} \,\ln (1+ \|\theta\|_{C^\sigma}) + C_2 \|\theta\|_{L^q},
\end{equation}
where $C_1$ and $C_2$ are constants depending  on $\beta$, $\sigma$ and $q$ only.
\end{prop}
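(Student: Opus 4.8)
The plan is to estimate $S = \frac12(\nabla u + (\nabla u)^T)$ pointwise by splitting the Fourier-side representation of $\nabla u$ into low, intermediate, and high frequencies. From $u = -\nabla^\perp \Lambda^{-2+\beta}\theta$ we have $(\nabla u)_{jk} = \mathcal{R}_l\mathcal{R}_m \Lambda^\beta \theta$ up to signs, so this is exactly the situation of Theorem \ref{nablau} with $P(\Lambda) = \Lambda^\beta$; note that $P(\xi)=|\xi|^\beta$ does obey Assumption \ref{P_con} when $0\le\beta\le 1$. Fix a large parameter $M>0$ to be chosen. First I would write, for any integer $N\ge 0$,
$$
\|S\|_{L^\infty} \le \|\Delta_{-1}\nabla u\|_{L^\infty} + \sum_{j=0}^{N-1}\|\Delta_j \nabla u\|_{L^\infty} + \sum_{j=N}^{\infty}\|\Delta_j \nabla u\|_{L^\infty}.
$$
For the low-frequency piece $\|\Delta_{-1}\nabla u\|_{L^\infty}$, use Bernstein together with the fact that the symbol of $\Delta_{-1}$ is compactly supported and $\Lambda^{-2+\beta}$ is integrable there, to bound it by $C\|\theta\|_{L^q}$ (this is where $q>1$ enters). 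For the middle sum, apply the bound (\ref{bound2}) from Theorem \ref{nablau} with $P(\Lambda)=\Lambda^\beta$, giving $\|\Delta_j\nabla u\|_{L^\infty}\le C\, 2^{\beta j}\|\Delta_j\theta\|_{L^\infty}$, and then use $\|\Delta_j\theta\|_{L^\infty}\le C\, 2^{-\beta j}\|\theta\|_{C^\beta}$ (the Besov characterization of the Hölder space $C^\beta = B^\beta_{\infty,\infty}$ for $0<\beta<1$; the endpoint $\beta=0$ is handled by $\|\Delta_j\theta\|_{L^\infty}\le C\|\theta\|_{L^\infty}\le C\|\theta\|_{C^\sigma}$, and $\beta=1$ via $\|\Delta_j\theta\|_{L^\infty}\le C\,2^{-j}\|\nabla\theta\|_{L^\infty}$ absorbed into $\|\theta\|_{C^1}$). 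This makes each term of the middle sum $O(\|\theta\|_{C^\beta})$, so the middle sum contributes $C\, N\,\|\theta\|_{C^\beta}$. For the high-frequency tail, again use (\ref{bound2}) and then $\|\Delta_j\theta\|_{L^\infty}\le C\,2^{-\sigma j}\|\theta\|_{C^\sigma}$ (here the embedding $C^\sigma\hookrightarrow B^\sigma_{\infty,\infty}$ for $\sigma>1$, writing $\sigma$ as integer part plus fractional part), yielding
$$
\sum_{j=N}^{\infty}\|\Delta_j\nabla u\|_{L^\infty}\le C\,\|\theta\|_{C^\sigma}\sum_{j=N}^{\infty} 2^{(\beta-\sigma)j}\le C\, 2^{(\beta-\sigma)N}\,\|\theta\|_{C^\sigma},
$$
which is summable precisely because $\sigma>\beta$.

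Combining the three pieces, $\|S\|_{L^\infty}\le C\|\theta\|_{L^q} + C\,N\,\|\theta\|_{C^\beta} + C\,2^{(\beta-\sigma)N}\|\theta\|_{C^\sigma}$ for every $N\ge 0$. Now choose $N$ to be the smallest nonnegative integer with $2^{(\sigma-\beta)N}\ge 1+\|\theta\|_{C^\sigma}$, i.e. $N\approx \frac{1}{\sigma-\beta}\log_2(1+\|\theta\|_{C^\sigma})$; then the last term is $O(1)\le C\|\theta\|_{L^q}$ (or simply bounded, absorbing into the constant, using $\|\theta\|_{C^\sigma}\ge\|\theta\|_{L^\infty}$ controls things — more carefully one bounds $2^{(\beta-\sigma)N}\|\theta\|_{C^\sigma}\le C$), and the middle term becomes $C\,\|\theta\|_{C^\beta}\,\ln(1+\|\theta\|_{C^\sigma})$. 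This gives exactly (\ref{logs}) with constants depending only on $\beta$, $\sigma$, and $q$.

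The main technical obstacle is handling the endpoint cases $\beta=0$ and $\beta=1$ (and keeping track of integer parts of $\sigma$) so that the Besov–Hölder identifications $\|\Delta_j\theta\|_{L^\infty}\sim 2^{-\beta j}\|\theta\|_{C^\beta}$ and $\|\Delta_j\theta\|_{L^\infty}\lesssim 2^{-\sigma j}\|\theta\|_{C^\sigma}$ are applied correctly; for noninteger exponents these are standard, but at integers one must use the Zygmund-space / derivative formulations, or alternatively just accept the slightly lossy bounds $\|\Delta_j\theta\|_{L^\infty}\le C\|\theta\|_{C^\beta}$ for $\beta\in\{0,1\}$ (since then $2^{\beta j}\|\Delta_j\theta\|_{L^\infty}$ no longer decays and the middle sum needs the $\|\Delta_j\theta\|_{L^\infty}\le 2^{-\min(\beta,1)j}\cdot 2^{\min(\beta,1)j}\|\Delta_j\theta\|_{L^\infty}$ splitting done with care). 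A secondary point is justifying the low-frequency bound by $\|\theta\|_{L^q}$ with $q>1$: one writes $\Delta_{-1}\nabla u = \Delta_{-1}(\mathcal{R}_l\mathcal{R}_m\Lambda^\beta)\theta = h\ast\theta$ where $h$ has a smooth compactly supported Fourier transform times $|\xi|^\beta$ near the origin times the Riesz symbols, hence $\widehat h$ is bounded and smooth (the Riesz symbol singularity at $0$ is cancelled since $\beta\ge 0$ only if... — actually for $\beta=0$ the symbol $\xi_l\xi_m/|\xi|^2$ is not smooth at $0$, but it is bounded and $\Delta_{-1}$ localizes to a ball, so $h\in L^{q'}$ by the fact that its Fourier transform is a bounded compactly supported function, giving $h\in L^{q'}$ for all $q'<\infty$), so Young's inequality yields $\|\Delta_{-1}\nabla u\|_{L^\infty}\le \|h\|_{L^{q'}}\|\theta\|_{L^q}$ with $1/q+1/q'=1$, finite since $q>1$.
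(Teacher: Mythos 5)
Your argument is correct in substance but follows a genuinely different route from the paper's. The paper proves Proposition \ref{sinfty} entirely in physical space: it writes $S(x)=\mathrm{p.v.}\int\Gamma(x-y)\,\theta(y)\,dy$ with an explicit kernel $\Gamma$ that is homogeneous of degree $-(2+\beta)$ and has zero mean on circles, splits the integral into the regions $|x-y|\le\rho$, $\rho\le|x-y|\le R$ and $|x-y|>R$ with a cutoff, bounds the three pieces by $C\|\theta\|_{C^\sigma}\rho^{\sigma-\beta}$, $C\|\theta\|_{C^\beta}\ln(2R/\rho)$ and a negative power of $R$ times $\|\theta\|_{L^q}$, and then optimizes in $\rho$; the endpoint $\beta=1$ is treated separately there because the kernel representation must be rewritten in terms of $\nabla\theta$. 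You instead run a Littlewood--Paley decomposition and invoke the multiplier bound \eqref{bound2} (Proposition \ref{important}) with $P(\Lambda)=\Lambda^\beta$, together with the Besov characterizations of $C^\beta$ and $C^\sigma$, optimizing in the truncation index $N$; your $2^{-N}\sim(1+\|\theta\|_{C^\sigma})^{-1/(\sigma-\beta)}$ plays exactly the role of $\rho$. Your route reuses the machinery of Section \ref{nablaubd}, works verbatim in $\mathbb{R}^d$ and for general symbols $P$ obeying Assumption \ref{P_con}, and treats $\beta=0$, $0<\beta<1$ and $\beta=1$ uniformly (modulo the integer-exponent Zygmund issue you correctly flag). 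The paper's route is more elementary and self-contained, makes the cancellation of the kernel explicit, and is the classical Constantin--Majda--Tabak argument that this criterion generalizes.

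Two caveats. First, your justification of the low-frequency bound is not quite right: a bounded, compactly supported Fourier transform gives $h\in L^{q'}$ only for $q'\ge 2$, not for all $q'<\infty$; for $1<q'<2$ one needs decay of $h$ at infinity (which does hold, e.g.\ a dyadic decomposition near $\xi=0$ yields $|h(x)|\lesssim |x|^{-2}$ for large $|x|$), but the cleanest fix is the one the paper itself uses in proving \eqref{bound3}: Bernstein plus the $L^q$-boundedness of the multiplier, i.e.\ $\|\Delta_{-1}\nabla u\|_{L^\infty}\le C\|\Delta_{-1}\nabla u\|_{L^q}\le C\|\theta\|_{L^q}$ for $1<q<\infty$. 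Second, after optimizing $N$ your high-frequency tail contributes an additive absolute constant rather than a multiple of $\|\theta\|_{L^q}$; this is harmless for the application in Theorem \ref{crit1} (and the paper's optimization of $\rho$ has the same feature), but strictly speaking \eqref{logs} then holds with an extra additive constant or with the logarithm read as $\ln(e+\cdot)$.
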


The rest of this section is arranged as follows. We prove Theorem \ref{crit1} first and then provide the proofs of Propositions \ref{nainfty} and \ref{sinfty}.

\vskip .1in
\begin{proof}[Proof of Theorem \ref{crit1}] Let $X$ be the particle trajectory as defined in (\ref{part}) and $A(x,t)$ be the back-to-labels map. The first equation in (\ref{SQG_beta}) implies that $\theta$ is conserved along the particle trajectory,
$$
\theta(x,t) = \theta_0(A(x,t)), \quad x \in \mathbb{R}^2, \, t\ge 0.
$$
Therefore, for any $\sigma\le 1$,
$$
\|\theta(\cdot,t)\|_{C^\sigma} =\sup_{x\not = y}\frac{|\theta(x,t)-\theta(y,t)|}{|x-y|^\sigma} \le \|\theta_0\|_{C^\sigma}\, \|\nabla_x A(\cdot,t)\|_{L^\infty}^\sigma.
$$
By Proposition \ref{nainfty}, we have
$$
\|\theta(\cdot,t)\|_{C^\sigma} \le \|\theta_0\|_{C^\sigma}\, \exp\left(\sigma\,\int_0^t \|S(\cdot,\tau)\|_{L^\infty} \,d\tau\right).
$$
Therefore,
\begin{equation} \label{g1}
\ln(1+\|\theta(\cdot,t)\|_{C^\sigma}) \le \ln(1+\|\theta_0\|_{C^\sigma}) + \sigma \int_0^t \|S(\cdot,\tau)\|_{L^\infty} \,d\tau.
\end{equation}
According to Proposition \ref{sinfty},
\begin{equation} \label{g2}
\int_0^t \|S(\cdot,\tau)\|_{L^\infty} \,d\tau \le C_1 \int_0^t \|\theta(\cdot,\tau)\|_{C^\beta} \,\ln (1+ \|\theta(\cdot,\tau)\|_{C^\sigma})\,d\tau + C_2 t\,\|\theta_0\|_{L^q}.
\end{equation}
Combining (\ref{g1}) and (\ref{g2}) and applying Gronwall's inequality yield
$$
\ln(1+\|\theta(\cdot,t)\|_{C^\sigma}) \le C\,\ln(1+\|\theta_0\|_{C^\sigma} + \|\theta_0\|_{L^q}) \exp\left(C\,\int_0^t \|\theta(\cdot,\tau)\|_{C^\beta}\,d\tau\right).
$$
In particular, taking $\sigma=1$ yields a bound for $\|\nabla \theta\|_{L^\infty}$.
The desired regularity $\theta\in C^\sigma$ with $\sigma>1$ then follows easily from the bound for $\|\nabla \theta\|_{L^\infty}$. This completes the proof of Theorem \ref{crit1}.
\end{proof}

\vskip .1in
\begin{proof}[Proof of Proposition \ref{nainfty}]
Differentiating the identity in (\ref{axid}) with respect to $t$, we obtain the equation for $A$, $$
\pp_t A + u\cdot\nabla A =0.
$$
Taking the gradient with respect to $x$, we find
$$
\pp_t (\nabla_x A) + u\cdot\nabla (\nabla_x A) = \nabla u\,  (\nabla_x A).
$$
Taking (Euclidian) inner product of this equation with $\nabla_x A$,
we find
$$
\frac{1}{2} \frac{D}{Dt} |\nabla_x A(x,t)|^2 =- \nabla u\, (\nabla_x
A))\cdot (\nabla_x A) .
$$
Adopting the Einstein summation convention, we have
$$
(\nabla u\,  (\nabla_x A))\cdot (\nabla_x A)  = \partial_{x_k} u_j\, \partial_{x_j}A_i \,\partial_{x_k} A_i = \partial_{x_j} u_k\, \partial_{x_k}A_i \,\partial_{x_j} A_i
$$
and thus
$$
(\nabla u\,  (\nabla_x A))\cdot (\nabla_x A) = ((\nabla u)^T\,  (\nabla_x A))\cdot (\nabla_x A)=  (S (\nabla_x A))\cdot (\nabla_x A).
$$
Therefore
$$
\frac12\frac{D}{Dt} |\nabla_x A|^2 \le |S(x,t)| |\nabla_x A|^2\leq
\|S (\cdot, t)\|_{L^\infty} |\nabla_x A|^2,
$$
and integrating along the particle trajectory we obtain
$$
|\nabla_x A(X(a,t),t)|\leq \exp\left(\int_0 ^t\|S (\cdot, \tau
)\|_{L^\infty}d\tau\right).
$$
Proposition \ref{nainfty} follows from this immediately, taking
supremum over $a\in \mathbb{ R}^2$.
\end{proof}

\vskip .1in
\begin{proof}[Proof of Proposition \ref{sinfty}]

The proof is divided into two cases: $\beta<1$ and $\beta=1$.
The case $\beta=1$ requires that $\sigma>1$ and is handled differently from
the case $\beta<1$.

\vskip .1in
We first deal with the case when $\beta<1$.  Invoking the Riesz potential for the
operator $\Lambda^{-2+\beta}$, the relation in (\ref{uth}) can be rewritten
$$
u(x) = C_\beta\,\int \nabla^\perp \left(\frac{1}{|x-y|^\beta}\right)\,\theta(y) \,dy
=\int K_\beta(x-y)\, \theta(y) \,dy
$$
with
$$
K_\beta(x) = C_\beta \, \frac{(-x_2, x_1)^T}{|x|^{2+\beta}},
$$
where $C_\beta$ is a  constant depending on $\beta$ only.
$\nabla u$ can be written as
$$
\nabla u (x) = \mbox{p.v.} \int \nabla_x K(x-y)\, \theta(y) \,dy,
$$
where $\mbox{p.v.}$ denotes the principal value and $\nabla_x K(x)$ can be explicitly written as
$$
\nabla_x K(x) =C_\beta\, \frac{1}{|x|^{4+\beta}}
\left(
\begin{array}{cc}
x_1 x_2  & x_2^2 \\ -x_1^2 & -x_1x_2
\end{array}
\right)
\,+\,C_\beta \frac{1}{|x|^{2+\beta}}
\left(
\begin{array}{cc}
0 & -1\\ 1 & 0
\end{array}
\right).
$$
Therefore the symmetric part of $\nabla u$ can be written as
$$
S(x) = \mbox{p.v.} \int \Gamma(x-y) \,\theta(y)\,dy
$$
where
$$
\Gamma(x) =C_\beta \frac{1}{|x|^{4+\beta}}
\left(
\begin{array}{cc}
2 x_1 x_2  & x_2^2-x_1^2 \\ x_2^2-x_1^2 & -2 x_1x_2
\end{array}
\right).
$$
The property that $\Gamma(x)$ is homogenous of degree $-(2+\beta)$ and has zero mean on the unit circle is useful in the following estimate of $S$.

\vskip .1in
Let $\chi(x)$ be a standard smooth cutoff function with $\chi(x)=1$ for $|x| \le \frac12$ and $\chi(x)=0$ for $|x|\ge 1$. Let $0<\rho \le R$. We divide $S$ into three parts,
$$
S(x,t) = L_1 + L_2 + L_3,
$$
where
\begin{eqnarray*}
L_1 &=& \int \chi\left(\frac{|x-y|}{\rho}\right) \Gamma(x-y) \,(\theta(y)-\theta(x))\,dy,
\\
L_2 &=& \int_{|x-y|\le R} \left(1-\chi\left(\frac{|x-y|}{\rho}\right)\right) \Gamma(x-y) \,(\theta(y)-\theta(x))\,dy,
\\
L_3 &=& \int_{|x-y|> R} \Gamma(x-y) \,\theta(y)\,dy.
\end{eqnarray*}
Since $\sigma>\beta$,
\begin{eqnarray*}
|L_1| &\le&  C_\beta \,\|\theta\|_{C^\sigma}\,\int_{|x-y| \le \rho} \frac{1}{|x-y|^{2+\beta-\sigma}} \,dy
\\
&=& C_\beta \,\|\theta\|_{C^\sigma}\, \rho^{\sigma-\beta}.
\end{eqnarray*}
$L_2$ can be bounded as follows.
\begin{eqnarray*}
|L_2| &\le&  C_\beta\,\|\theta\|_{C^\beta}\, \int_{\frac{\rho}{2} \le |x-y| \le R} \frac{1}{|x-y|^{2}} \,dy \\
&=&  C_\beta\, \|\theta\|_{C^\beta}\,\ln\left(\frac{2R}{\rho}\right).
\end{eqnarray*}
By H\"{o}lder's inequality,
$$
|L_3| \le C_{\beta,q}\, R^{-1-\beta} \|\theta\|_{L^q}
$$
Setting $\rho = \ln (1 + \|\theta\|_{C^\sigma})$ and $R=1$ yields (\ref{logs}).

\vskip .1in
We now turn to the case when $\beta=1$. This case corresponds to the SQG equation. Then $\sigma>\beta =1$. It follows from the relation in (\ref{uth}) that
$$
\nabla u(x) = \mbox{p.v.} \int \hat{y} \otimes \nabla \theta(x+y) \frac{dy}{|y|^2}
$$
where $\hat{y}$ denotes the unit vector in the direction of $y$ and $a\otimes b$ denotes the tensor product of two vectors $a$ and $b$. Therefore,
$$
S(x) = \mbox{p.v.} \int \frac12\left(\hat{y} \otimes \nabla \theta(x+y) + \nabla \theta(x+y)\otimes \hat{y}\right) \frac{dy}{|y|^2}.
$$
The difference between this representation and the one in the case $\beta<1$ is that
this formula involves $\nabla \theta$ instead of just $\theta$. $\|S\|_{L^\infty}$ can
be bounded in a similar fashion as in the case $\beta<1$. In fact, we again use a
smooth cutoff function $\chi$ to decompose the integral into three parts and estimate
each one of them as we did previously. For example,
$$
L_1= \mbox{p.v.} \int \chi\left(\frac{|y|}{\rho}\right) \frac12\left(\hat{y} \otimes (\nabla \theta(x+y)-\nabla\theta(x)) + (\nabla \theta(x+y)-\nabla\theta(x))\otimes \hat{y}\right) \frac{dy}{|y|^2}
$$
can be bounded by
\begin{eqnarray*}
|L_1| &\le& \int_{|y|\le \rho} |\nabla \theta(x+y)-\nabla\theta(x)| \frac{dy}{|y|^2}
\\
&\le& \|\nabla \theta\|_{C^{\sigma-1}}\, \rho^{\sigma-1} \le \|\theta\|_{C^\sigma}\,\rho^{\sigma-1}.
\end{eqnarray*}
We omit details for the estimates of the other parts. Putting the estimates together yield the same bound as in the case $\beta<1$. This completes the proof of Propostion \ref{sinfty}.
\end{proof}

\vskip .4in
\appendix

\section{Besov spaces and related facts}
\label{Besov}

This appendix provides the definitions of $\Delta_j$, $S_j$ and inhomogeneous Besov spaces. Related useful facts such as the Bernstein inequality are also provided here. Materials presented in this appendix here can be found in several books and papers (see e.g. \cite{BL},\cite{Che} or \cite{Tr}).

\vskip .1in
Let ${\mathcal S}({\mathbf R}^d)$ and ${\mathcal S}'({\mathbf R}^d)$ denote the Schwartz class and tempered distributions, respectively. The partition of unity states that there exist two nonnegative radial functions $\psi, \phi \in {\mathcal S}$ such that
\begin{eqnarray*}
&& \mbox{supp}\, \psi \subset \,\,B\left(0,\frac{11}{12}\right), \quad \mbox{supp}\, \phi \subset \,\,A\left(0,\frac34, \frac{11}{6}\right),\\
&& \psi(\xi) + \sum_{j\ge 0} \phi_j(\xi) = 1 \quad \mbox{for}\quad \xi \in {\mathbf R}^d, \qquad \phi_j(\xi) = \phi(2^{-j}\,\xi),\\
&& \mbox{supp}\, \psi \cap \mbox{supp}\, \phi_j = \emptyset\quad \mbox{if}  \,\,j\ge 1,\\
&& \mbox{supp}\, \phi_j \cap \mbox{supp}\, \phi_k = \emptyset\quad \mbox{if}  \,|j-k|\ge 2,
\end{eqnarray*}
where $B(0,r)$ denotes the ball centered at the origin with radius $r$ and $A(0,r_1,r_2)$ the annulus centered at the origin with the inner radius $r_1$ and the outer radius $r_2$.

\vskip .1in
For any $f\in {\mathcal S}'$, set
\begin{eqnarray*}
&& \Delta_{-1} f = \mathcal{F}^{-1}\left(\psi(\xi) \mathcal{F}(f)\right) = \Psi \ast f, \\
&& \Delta_j f = \mathcal{F}^{-1}\left(\phi_j(\xi) \mathcal{F}(f)\right) = \Phi_j \ast f, \quad j=0,1,2,\cdots, \\
&& \Delta_j f =0\quad \mbox{for}\quad j\le -2,\\
&& S_j =\sum_{k=-1}^{j-1} \Delta_k \,\,\mbox{when}\quad j\ge 0,
\end{eqnarray*}
where we have used $\mathcal{F}$ and $\mathcal{F}^{-1}$ to denote the Fourier and inverse Fourier transforms. respectively. Clearly,
$$
\Psi = \mathcal{F}^{-1}(\psi), \quad \Phi_0 =\Phi =\mathcal{F}^{-1}(\phi), \quad \Phi_j(x) = \mathcal{F}^{-1}(\phi_j)(x) = 2^{jd}\,\Phi(2^jx).
$$
In addition, we can write
$$
\mathcal{F}(S_j f) = \psi\left(\frac{\xi}{2^j}\right)\, \mathcal{F}(f).
$$

\vskip .1in With these notation at our disposal, we now provide the definition of the inhomogeneous Besov space.
\begin{define}
For $s \in {\mathbf R}$ and $1\le p,q \le \infty$, the inhomogeneous
Besov space $B_{p,q}^s$ is defined by
$$
B_{p,q}^s = \left\{f\in {\mathcal S}':\,\,\|f\|_{B_{p,q}^s}<\infty
\right\},
$$
where
\begin{equation}\label{norm2}
\|f||_{B^s_{p,q}} \equiv \left\{
\begin{array}{ll}
\displaystyle  \Big(\sum_{j=-1}^\infty \left(2^{js}\, \|\Delta_j
f\|_{L^p}\Big)^q \right)^{1/q},\quad & \mbox{if $q<\infty$},
\\
\displaystyle  \sup_{-1\le j<\infty} 2^{js} \, \|\Delta_j f\|_{L^p},
\quad &\mbox{if $q=\infty$}.
\end{array}
\right.
\end{equation}
\end{define}

\vskip .1in
The Besov spaces and the standard Sobolev spaces defined by
$$
W^s_p=
(1-\Delta)^{-s/2}L^p
$$
obey the simple facts stated in the following lemma (see \cite{BL}).
\begin{prop}
Assume that $s\in {\mathbf R}$ and $p,q\in [1,\infty]$.
\begin{enumerate}
\item[1)] If $s_1 \le s_2$, then $B^{s_2}_{p,q}
\subset B^{s_1}_{p,q}$,
\item[2)]  If $1\le q_1 \le q_2 \le \infty$, then $B^s_{p,q_1} \subset B^s_{p,q_2}$,
\item[3)]If $1\le p_1\le p_2\le \infty$, $1\le q_1, q_2
\le \infty$, and $s_1\ge s_2 + d(\frac{1}{p_1}-\frac{1}{p_2})$, then
$$B^{s_1}_{p_1,q_1}({\mathbf R}^d) \subset B^{s_2}_{p_2,q_2}({\mathbf
R}^d),$$
\item[4)] If $1<p<\infty$, then
$$
B^s_{p,\min(p,2)} \subset W^s_p \subset B^s_{p,\max(p,2)}.
$$
\end{enumerate}
\end{prop}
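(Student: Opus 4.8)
The plan is to reduce 1)--3) to inequalities between the weighted sequence spaces $\ell^q$ indexed by $j\ge -1$ (with Bernstein's inequality supplying the only analytic input, in 3)), and to deduce 4) from the Littlewood--Paley characterization of $L^p$. Throughout I would write $a_j:=2^{js}\|\Delta_j f\|_{L^p}$ and $c_j:=2^{js}\Delta_j f$. For 1), since $s_1-s_2\le 0$ the factor $2^{j(s_1-s_2)}=2^{js_1}/2^{js_2}$ is bounded by $2^{s_2-s_1}$ uniformly in $j\ge -1$, so inserting this into the definition (\ref{norm2}) gives $\|f\|_{B^{s_1}_{p,q}}\le 2^{s_2-s_1}\|f\|_{B^{s_2}_{p,q}}$ for every $q$. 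For 2) I would simply invoke the elementary inclusion $\ell^{q_1}\hookrightarrow\ell^{q_2}$ (valid since $q_1\le q_2$) applied to the sequence $(a_j)_{j\ge -1}$.

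For 3) the extra ingredient is Bernstein's inequality: since $\widehat{\Delta_j f}$ is supported in an annulus of size comparable to $2^j$ when $j\ge 0$ and in a fixed ball when $j=-1$, one has $\|\Delta_j f\|_{L^{p_2}}\le C\,2^{jd(1/p_1-1/p_2)}\|\Delta_j f\|_{L^{p_1}}$ with $C$ independent of $j\ge -1$. Multiplying by $2^{js_2}$ gives $2^{js_2}\|\Delta_j f\|_{L^{p_2}}\le C\,2^{-j\varepsilon}\,2^{js_1}\|\Delta_j f\|_{L^{p_1}}$ with $\varepsilon:=s_1-s_2-d(1/p_1-1/p_2)\ge 0$. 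Taking $\ell^{q_2}$ norms in $j$ and controlling the right-hand side by an $\ell^{q_1}$ norm---by H\"older's inequality for sequences when $\varepsilon>0$, since $2^{-j\varepsilon}$ lies in every $\ell^r$, and by 2) in the borderline case $\varepsilon=0$---yields $\|f\|_{B^{s_2}_{p_2,q_2}}\le C\,\|f\|_{B^{s_1}_{p_1,q_1}}$.

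For 4) the main tool is the Littlewood--Paley square-function identity. For $1<p<\infty$, the vector-valued form of the Mihlin--H\"ormander multiplier theorem (Theorem \ref{MH}) shows that $(1-\Delta)^{s/2}$ followed by $\Delta_j$ acts like $2^{js}$ times a multiplier bounded on $L^p$ uniformly in $j$, whence
$$
\|f\|_{W^s_p}\;\approx\;\Big\|\Big(\sum_{j\ge -1}|c_j|^2\Big)^{1/2}\Big\|_{L^p}.
$$
The two inclusions then follow by comparing $\big\|(\sum_j|c_j|^2)^{1/2}\big\|_{L^p}$ with $\big(\sum_j\|c_j\|_{L^p}^q\big)^{1/q}$: when $q\le\min(p,2)$ the pointwise bound $\ell^q\hookrightarrow\ell^2$ together with Minkowski's inequality in $L^{p/q}$ gives $B^s_{p,q}\subset W^s_p$; when $q\ge\max(p,2)$ the bound $\ell^2\hookrightarrow\ell^q$ together with the forward Minkowski inequality (if $p\ge 2$) or the reverse Minkowski inequality (if $p\le 2$) gives $W^s_p\subset B^s_{p,q}$. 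Choosing $q=\min(p,2)$ and $q=\max(p,2)$ produces the stated chain $B^s_{p,\min(p,2)}\subset W^s_p\subset B^s_{p,\max(p,2)}$.

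I expect 4) to be the only genuine obstacle. Items 1)--3) are soft manipulations of sequences once Bernstein's inequality is available, whereas 4) rests on the $L^p$-boundedness of the Littlewood--Paley square function (equivalently, a vector-valued Mihlin estimate); this is precisely where the hypothesis $1<p<\infty$ enters and where the statement fails at the endpoints. The only remaining care is the bookkeeping of which Minkowski inequality (ordinary or reverse) applies in each of the ranges $p\le 2$ and $p\ge 2$, which is routine.
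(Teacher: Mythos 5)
The paper does not prove this proposition at all: it is quoted as a collection of standard facts with a pointer to Bergh--L\"{o}fstr\"{o}m \cite{BL}, so there is no in-paper argument to compare yours against. Your sketch is the standard textbook proof and is essentially sound: 1) and 2) are the monotonicity of the weights $2^{js}$ over $j\ge -1$ and of the $\ell^q$ scale; 3) is Bernstein's inequality (Proposition \ref{bern}, applicable uniformly in $j\ge -1$ since $\Delta_{-1}$ is supported in a fixed ball) followed by summation; and 4) is the Littlewood--Paley characterization $\|f\|_{W^s_p}\approx \|(\sum_j 4^{js}|\Delta_j f|^2)^{1/2}\|_{L^p}$ for $1<p<\infty$ combined with the pointwise $\ell^q$--$\ell^2$ comparisons and the direct or reverse Minkowski inequality in $L^{p/2}$ according as $p\ge 2$ or $p\le 2$. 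Two small remarks. First, the square-function equivalence in 4) needs the \emph{vector-valued} Mihlin multiplier theorem (or Khintchine's inequality plus the scalar one), which is strictly more than Theorem \ref{MH} as stated in the paper; since you are citing it rather than proving it, this is fine but worth being explicit about, as it is exactly where $1<p<\infty$ enters. Second, in the borderline case $\varepsilon=0$ of 3) your appeal to 2) silently requires $q_1\le q_2$; this is not an oversight of yours but a genuine imprecision in the statement as printed (with $p_1=p_2$, $s_1=s_2$, $q_1=\infty$, $q_2=1$ the claimed inclusion $B^s_{p,\infty}\subset B^s_{p,1}$ is false), and the correct hypothesis is either strict inequality $s_1>s_2+d(\frac{1}{p_1}-\frac{1}{p_2})$ or $q_1\le q_2$ in the case of equality. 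Your proof correctly isolates where that restriction is needed.
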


\vskip .1in
The following Bernstein type inequalities are very useful and have been used in the previous sections.
\begin{prop}\label{bern}
Let $\alpha\ge0$. Let $1\le p\le q\le \infty$.
\begin{enumerate}
\item[1)] If $f$ satisfies
$$
\mbox{supp}\, \widehat{f} \subset \{\xi\in {\mathbf R}^d: \,\, |\xi|
\le K 2^j \},
$$
for some integer $j$ and a constant $K>0$, then
$$
\max_{|\beta|=k}\|D^\beta f\|_{L^q({\mathbf R}^d)} \le C\, 2^{k j +
jd(\frac{1}{p}-\frac{1}{q})} \|f\|_{L^p({\mathbf R}^d)},
$$
$$
\|(-\Delta)^\alpha f\|_{L^q({\mathbf R}^d)} \le C\, 2^{2\alpha j +
jd(\frac{1}{p}-\frac{1}{q})} \|f\|_{L^p({\mathbf R}^d)}
$$
for some constant $C$ depending on $K$, $p$ and $q$ only.
\item[2)] If $f$ satisfies
$$
\mbox{supp}\, \widehat{f} \subset \{\xi\in {\mathbf R}^d: \,\, K_12^j
\le |\xi| \le K_2 2^j \}
$$
for some integer $j$ and constants $0<K_1\le K_2$, then
$$
C\, 2^{k j} \|f\|_{L^q({\mathbf R}^d)} \le \max_{|\beta|=k}\|D^\beta f\|_{L^q({\mathbf R}^d)} \le C\, 2^{k j +
jd(\frac{1}{p}-\frac{1}{q})} \|f\|_{L^p({\mathbf R}^d)},
$$
$$
C\, 2^{2\alpha j} \|f\|_{L^q({\mathbf R}^d)} \le \|(-\Delta)^\alpha
f\|_{L^q({\mathbf R}^d)} \le C\, 2^{2\alpha j +
jd(\frac{1}{p}-\frac{1}{q})} \|f\|_{L^p({\mathbf R}^d)},
$$
where the constants $C$ depend on $K_1$, $K_2$, $p$ and $q$ only.
\end{enumerate}
\end{prop}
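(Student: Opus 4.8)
The plan is to deduce all four inequalities from Young's convolution inequality applied to the trivial identity $f=H_j\ast f$, where $H_j$ is the inverse Fourier transform of a fixed smooth bump rescaled to the dyadic scale $2^j$ adapted to the spectral support of $f$. Once $f$ is written this way, each of the operators $D^\beta$ and $(-\Delta)^\alpha$ falls only on the kernel $H_j$, so the whole of the proposition reduces to computing $L^r$-norms of rescaled Schwartz (or nearly Schwartz) kernels.

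For the ball case of part 1), fix $h\in C_0^\infty(\mathbb{R}^d)$ with $h\equiv1$ on $\{|\xi|\le K\}$ and $\mathrm{supp}\,h\subset\{|\xi|\le2K\}$, put $h_j(\xi)=h(2^{-j}\xi)$, and note that $\mathrm{supp}\,\widehat{f}\subset\{|\xi|\le K2^j\}$ gives $f=H_j\ast f$ with $H_j=\mathcal{F}^{-1}h_j=2^{jd}H(2^j\cdot)$, $H=\mathcal{F}^{-1}h\in\mathcal{S}$. Then $D^\beta f=(D^\beta H_j)\ast f$, and Young's inequality with $r$ determined by $1+\tfrac1q=\tfrac1r+\tfrac1p$ (so $1-\tfrac1r=\tfrac1p-\tfrac1q\in[0,1]$, which is exactly where the hypothesis $p\le q$ enters and which forces $r\in[1,\infty]$) gives $\|D^\beta f\|_{L^q}\le\|D^\beta H_j\|_{L^r}\|f\|_{L^p}$. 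A change of variables yields $D^\beta H_j(x)=2^{jd+j|\beta|}(D^\beta H)(2^jx)$, hence
\[
\|D^\beta H_j\|_{L^r}=2^{j|\beta|+jd(1-1/r)}\,\|D^\beta H\|_{L^r}=C\,2^{kj+jd(1/p-1/q)}
\]
with $C=\|D^\beta H\|_{L^r}<\infty$; this is the first bound in 1). For $(-\Delta)^\alpha$ one repeats the argument with $G_j$ in place of $H_j$, where $\widehat{G_j}(\xi)=|\xi|^{2\alpha}h_j(\xi)=2^{2\alpha j}g(2^{-j}\xi)$ and $g(\xi)=|\xi|^{2\alpha}h(\xi)$; then $G_j=2^{j(d+2\alpha)}G(2^j\cdot)$ with $G=\mathcal{F}^{-1}g$, and $\|G_j\|_{L^r}=2^{2\alpha j+jd(1/p-1/q)}\|G\|_{L^r}$, which closes the estimate provided $\|G\|_{L^r}<\infty$ for all $r\in[1,\infty]$.

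The annulus case of part 2) is treated the same way, with $h$ replaced by a bump $\widetilde{h}$ equal to $1$ on $\{K_1\le|\xi|\le K_2\}$ and supported in $\{K_1/2\le|\xi|\le2K_2\}$; since $\mathrm{supp}\,\widetilde{h}$ misses the origin, now $|\xi|^{2\alpha}\widetilde{h}(\xi)\in\mathcal{S}$ and no regularity issue arises. For the lower bounds in 2) I would invert the operators on the annulus. The multinomial identity $|\xi|^{2k}=\sum_{|\gamma|=k}\binom{k}{\gamma}|\xi^\gamma|^2$ shows that $\sum_{|\gamma|=k}|\xi^\gamma|^2$ is comparable to $|\xi|^{2k}$, hence bounded below by $c\,K_1^{2k}2^{2jk}>0$ on the dyadic annulus, so the symbols $m_\beta(\xi)=\overline{(i\xi)^\beta}\,\widetilde{h}(2^{-j}\xi)\big/\sum_{|\gamma|=k}|\xi^\gamma|^2$ are smooth, supported there, and satisfy $\sum_{|\beta|=k}m_\beta(\xi)(i\xi)^\beta=\widetilde{h}(2^{-j}\xi)=1$ on $\mathrm{supp}\,\widehat{f}$; thus $f=\sum_{|\beta|=k}M_{j,\beta}\ast D^\beta f$, and a rescaling gives $m_\beta(\xi)=2^{-jk}n_\beta(2^{-j}\xi)$ for a fixed Schwartz symbol $n_\beta$, so $\|M_{j,\beta}\|_{L^1}=2^{-jk}\|\mathcal{F}^{-1}n_\beta\|_{L^1}$ and Young's inequality with $L^1$ kernels yields $2^{kj}\|f\|_{L^q}\le C\max_{|\beta|=k}\|D^\beta f\|_{L^q}$. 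The lower bound for $(-\Delta)^\alpha$ is identical, using the single symbol $|\xi|^{-2\alpha}\widetilde{h}(2^{-j}\xi)$, which is admissible precisely because $|\xi|^{-2\alpha}$ is smooth on $\mathrm{supp}\,\widetilde{h}$.

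The one step that is not pure scaling-and-Young bookkeeping, and which I expect to be the main obstacle, is verifying $G=\mathcal{F}^{-1}\big(|\xi|^{2\alpha}h(\xi)\big)\in L^r(\mathbb{R}^d)$ for every $r\in[1,\infty]$ in the ball case of 1) when $2\alpha$ is not an even integer. Here $|\xi|^{2\alpha}h(\xi)$ is bounded with compact support (so $G\in L^\infty$), but it is only finitely differentiable at the origin, and crude integration by parts only gives $|G(x)|\lesssim|x|^{-(\lceil d+2\alpha\rceil-1)}$, which need not be integrable at infinity when $2\alpha\le1$. The remedy is to exploit the homogeneous structure: write $|\xi|^{2\alpha}h(\xi)=|\xi|^{2\alpha}\eta(\xi)+|\xi|^{2\alpha}(h-\eta)(\xi)$ with $\eta$ a cutoff equal to $1$ near the origin; the second term is Schwartz, and for the first the Riesz-potential identity (namely that $\mathcal{F}^{-1}(|\xi|^{2\alpha})$ agrees away from the origin with the smooth function $c_{d,\alpha}|x|^{-d-2\alpha}$) together with $\mathcal{F}^{-1}\eta\in\mathcal{S}$ gives $|G(x)|\lesssim|x|^{-d-2\alpha}$ for $|x|\ge1$, whence $G\in L^r$ for every $r\ge1$ since $d+2\alpha>d$. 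Everything else is routine.
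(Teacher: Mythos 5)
Your proof is correct. Note that the paper itself offers no proof of this proposition: it is stated in Appendix A as a standard fact with pointers to \cite{BL}, \cite{Che} and \cite{Tr}, so there is nothing to compare against line by line; your argument is, however, the standard one found in those references (write $f$ as the convolution of itself with a rescaled bump adapted to its spectrum, let the differential operator fall on the kernel, apply Young's inequality with the exponent $r$ fixed by $1+\tfrac1q=\tfrac1r+\tfrac1p$, and for the reverse inequalities on an annulus invert the symbol using that $\sum_{|\gamma|=k}|\xi^\gamma|^2\sim|\xi|^{2k}$ and $|\xi|^{-2\alpha}$ are smooth there). You also correctly isolated the one point that is not pure scaling bookkeeping, namely that $\mathcal{F}^{-1}\bigl(|\xi|^{2\alpha}h(\xi)\bigr)$ lies in every $L^r$ when $2\alpha$ is not an even integer, where naive integration by parts only yields decay of order $|x|^{-(\lceil d+2\alpha\rceil-1)}$; your fix via the homogeneity of $|\xi|^{2\alpha}$ near the origin (equivalently, a dyadic decomposition of the symbol at frequency scales $2^k$, $k\le 0$, summing $2^{k(d+2\alpha)}(1+2^k|x|)^{-N}$ to get $|x|^{-d-2\alpha}$ decay) is the right one and gives the claimed constants depending only on $K$ (resp.\ $K_1,K_2$), $p$, $q$, together with $d$, $k$ and $\alpha$.
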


\vskip .4in
\section*{Acknowledgements}
Chae's research was partially supported by NRF grant No.2006-0093854. Constantin's research was partially supported by NSF grant DMS 0804380. Wu's research was partially supported by NSF grant DMS 0907913. Wu thanks the Department of Mathematics at Sungkyunkwan University for its hospitality during his visit there, and thanks Professor Changxing Miao for discussions.

\vskip .4in

\end{document}